\def\MRbibitem{\@ifnextchar[\my@lbibitem\my@bibitem}
\def\mybiblabel#1#2{\@biblabel{{\hyperref{http://www.ams.org/mathscinet-getitem?mr=#1}{}{}{#2}}}}
\def\myhyperanchor#1{\Hy@raisedlink{\hyper@anchorstart{cite.#1}\hyper@anchorend}}
\def\my@lbibitem[#1]#2#3#4\par{%
  \item[\mybiblabel{#2}{#1}\myhyperanchor{#3}\hfill]#4%
  \@ifundefined{ifbackrefparscan}{}{\BR@backref{#3}}%
  \if@filesw{\let\protect\noexpand\immediate
    \write\@auxout{\string\bibcite{#3}{#1}}}\fi\ignorespaces%
}
\def\my@bibitem#1#2#3\par{%
  \refstepcounter\@listctr
  \item[\mybiblabel{#1}{\the\value\@listctr}\myhyperanchor{#2}\hfill]#3%
  \@ifundefined{ifbackrefparscan}{}{\BR@backref{#2}}%
  \if@filesw\immediate\write\@auxout
    {\string\bibcite{#2}{\the\value\@listctr}}\fi\ignorespaces%
}
\DeclareFontFamily{U} {MnSymbolA}{}
\DeclareFontShape{U}{MnSymbolA}{m}{n}{
   <-6> MnSymbolA5
   <6-7> MnSymbolA6
   <7-8> MnSymbolA7
   <8-9> MnSymbolA8
   <9-10> MnSymbolA9
   <10-12> MnSymbolA10
   <12-> MnSymbolA12}{}
\DeclareFontShape{U}{MnSymbolA}{b}{n}{
   <-6> MnSymbolA-Bold5
   <6-7> MnSymbolA-Bold6
   <7-8> MnSymbolA-Bold7
   <8-9> MnSymbolA-Bold8
   <9-10> MnSymbolA-Bold9
   <10-12> MnSymbolA-Bold10
   <12-> MnSymbolA-Bold12}{}
\DeclareSymbolFont{MnSyA} {U} {MnSymbolA}{m}{n}
 \DeclareFontFamily{U} {MnSymbolC}{}
\DeclareFontShape{U}{MnSymbolC}{m}{n}{
  <-6> MnSymbolC5
  <6-7> MnSymbolC6
  <7-8> MnSymbolC7
  <8-9> MnSymbolC8
  <9-10> MnSymbolC9
  <10-12> MnSymbolC10
  <12-> MnSymbolC12}{}
\DeclareFontShape{U}{MnSymbolC}{b}{n}{
  <-6> MnSymbolC-Bold5
  <6-7> MnSymbolC-Bold6
  <7-8> MnSymbolC-Bold7
  <8-9> MnSymbolC-Bold8
  <9-10> MnSymbolC-Bold9
  <10-12> MnSymbolC-Bold10
  <12-> MnSymbolC-Bold12}{}
\DeclareSymbolFont{MnSyC} {U} {MnSymbolC}{m}{n}
\DeclareMathSymbol{\top}{\mathord}{MnSyA}{219} 
\DeclareMathSymbol{\plus}{\mathord}{MnSyC}{20} 
\declaretheorem[numberwithin=section]{theorem}
\declaretheorem[sibling=theorem]{lemma}
\declaretheorem[sibling=theorem]{corollary}
\declaretheorem[sibling=theorem]{proposition}
\declaretheorem[sibling=theorem,style=definition]{definition}
\declaretheorem[sibling=theorem,style=remark]{remark}
\numberwithin{equation}{section}     
\setlist[enumerate,1]{label={\upshape(\alph*)},ref=\alph*}
\setlist[enumerate,2]{label={\upshape(\arabic*)},ref=\arabic*}
\newcommand{\R}{\mathbb{R}}
\newcommand{\N}{\mathbb{N}}
\def\phi{\varphi}
\def\R{{\mathbb R}}
\def\N{{\mathbb N}}
\def\Q{{\mathbb Q}}
\newcommand{\vertiii}[1]{{\left\vert\kern-0.25ex\left\vert\kern-0.25ex\left\vert #1 
    \right\vert\kern-0.25ex\right\vert\kern-0.25ex\right\vert}}
\newcommand{\invertiii}[1]{{\vert\kern-0.25ex\vert\kern-0.25ex\vert #1 
    \vert\kern-0.25ex\vert\kern-0.25ex\vert}}
\begin{document}

\title{Odometers in non-compact spaces}

\subjclass{37E05, 37B10, 37A35}

\keywords{Odometer, Baire space, continued fractions}

\begin{thanks}  {We would like to thank An\'ibal Velozo for interesting discussions on the topic of this article and  Giovanni Panti for informing us of \cite{bi}. GI was partially supported by Proyecto Fondecyt 1230100. MP was partially supported by Proyecto Fondecyt 1220032}
 \end{thanks}

\author[G.~Iommi]{Godofredo Iommi} \address{Facultad de Matem\'aticas,
Pontificia Universidad Cat\'olica de Chile (UC), Avenida Vicu\~na Mackenna 4860, Santiago, Chile}
 \email{\href{mailto:godofredo.iommi@gmail.com}{godofredo.iommi@gmail.com}} 
\urladdr{\href{http://www.mat.uc.cl/~giommi}{www.mat.uc.cl/$\sim$giommi}}

 \author[M.~Ponce]{Mario Ponce}   \address{Facultad de Matem\'aticas,
Pontificia Universidad Cat\'olica de Chile (UC), Avenida Vicu\~na Mackenna 4860, Santiago, Chile}
\email{\href{mponcea@mat.uc.cl}{mponcea@mat.uc.cl}}

\begin{abstract}
We define an \emph{odometer} in the Baire space. That is the non-compact space of one sided sequences of natural numbers. We go on to prove that it is topologically conjugated to the dyadic odometer restricted to an appropriate non-compact subset of the space of one sided sequences on two symbols. We extend the action of the odometer to finite words. It turns out that, arranging  the finite words in appropriate binary tree, this extension runs through the tree from left-right and top-down. Several well known binary trees made out of rational numbers, such as  the Kepler tree or the Calkin and Wilf tree, are recovered in this way. Indeed, it suffices to identify  finite words with rational numbers by means of certain continued fractions. The odometric action allow us to recover, in a unified way, several counting results for the rational numbers.  Moreover, associated to certain interval maps with countably many branches we construct their corresponding odometers. Explicit formulas are provided in the cases of the Gauss map (continued fractions) and the Renyi map (backward continued fractions).
\end{abstract}

\maketitle

\section{Introduction}
Odometers form an important class of dynamical systems. They capture and exemplify how complicated simple dynamical systems can be.  A well known example is the \emph{dyadic odometer}. In this case, the phase space is that of one sided sequences of zeroes and ones. The action is given by the addition by $\underline{1}=(1,0,0, \dots)$ with the \emph{carry over} rule (see section \ref{sec:def} for details). For this system,  every orbit is dense and it has a unique invariant measure. Interestingly, every continuous interval map $f:[0,1] \to [0,1]$ of zero topological entropy with an invariant non-atomic measure is metrically isomorphic to the dyadic odometer (see, for example, \cite[Theorem 15.4.2]{kh}).  The dyadic odometer is related to binary expansions. More general odometers  related to other numeration systems, such as  Cantor series,  can also be constructed \cite{glt}. We refer to the survey \cite{d} by Downarowicz, for a wealth of properties and examples. In all the above settings, the phase space is compact. In this note we provide a simple and explicit definition of an \emph{odometer} in the Baire space $\N^{\N}$, see Definition \ref{def_odo}. This space is endowed with the natural product topology and it is a non compact space (actually, not even locally compact). Our definition is similar to the \emph{addition by one with carry over} but with countably many symbols. The similarity is not only formal, we actually prove that it is topologically conjugated to the dyadic odometer restricted an appropriate non-compact subset of  the space of one sided sequences on two symbols (see Theorem \ref{main_odo}). Therefore, the map we define is minimal, uniquely ergodic, has zero topological entropy, pure discrete spectrum and, of course, all other properties of the dyadic odometer that are preserved under topological conjugacy. Recall that the Baire space is dynamical model for a wide range of continued fractions. Therefore, this enable us to define odometers in numerical systems with countably many symbols.
 We note that, recently, certain substitution systems have been defined in countable alphabets \cite{dmfv,f}.

Odometers are counting devices. Perhaps, one of the first manifestation of this is the counting of dyadic rationals. Indeed, it is possible to extend the action of the dyadic odometer to finite words. The binary expansion of real numbers allows for the identification of finite words in the alphabet $\{0,1\}$  with rational dyadic numbers. It turns out that the orbit of the word $(1)$, with respect to the dyadic odometer, provides a bijection between the sets of dyadic numbers and $\N \cup \{0\}$. A standard way to see this is to order the dyadic rationals in a binary tree and to define a function that runs through the tree from left-right and top-down. This is can be achieved using the dyadic odometer as explained in section \ref{dyd}. We extend the action of the odometer on the Baire space to  finite words on a countable alphabet, see section \ref{f}. Several continued fraction systems allow us to identify finite words on a countable alphabet with rational numbers. Proceeding as in the case of the dyadic tree, it is possible to (re)obtain several well known trees of rationals. For example, the Kepler tree,  which was defined by Kepler in 1619, can be obtained in this way identifying finite words with rational numbers using the continued fraction expansion, see section \ref{con_f}. Also, the Calkin and Wilf tree, see section \ref{con_b}, is obtained using the same method and  identifying finite words with rational numbers using the backward continued fraction expansion. Several results on counting rationals that is, providing explicit bijections between $\N \cup \{0\}$ and rationals in $(0,1)$, can now be recovered just considering the orbit of the root of the tree with respect to the appropriate extension of the action of the odometer to the set of finite words.

The Baire space has strong universality properties. For example, Alexandrov and Urysohn  proved that the Baire space is the unique, up to homeomorphism, nonempty zero-dimensional Polish  space, for which all compact subsets have empty interior. This, and similar, properties allows for the extension of the definition of \emph{non-compact} odometer to a wide range of spaces. For example, it is possible to define odometers in non-locally compact symbolic spaces different from the Baire space. These spaces occur naturally in the coding of certain dynamical systems. 

More concretely, for a class of interval maps with countably many branches we define the corresponding odometer. We consider first the case of the Gauss map, related to the continued fraction expansion. We obtain an explicit formula  for its odometer which is defined in terms of the Fibonacci numbers (Proposition \ref{action-gauss}). Also, we study the odometer associated to the Renyi map, which is related to the backward continued fraction expansion. It turns out that the corresponding odometer is a map that has been used to construct explicit bijections between the natural and the rational numbers (Proposition \ref{bcf}). Bonanno and Isola \cite[Theorem 2.3]{bi} proved that this map is topologically conjugated to the dyadic odometer and we  recovered the same result using backward continued fractions in \cite{ip}.

\section{Odometer in the Baire space} \label{sec:def}
In this section we define an odometer in a non-compact symbolic space. We prove that it shares all the dynamically relevant properties of its compact counterpart. Endow the set $\{0,1\}$ with the discrete topology and $\Sigma_2=\{0,1\}^{\N}$, with the product topology. This is a compact metrizable space.

 \begin{definition}
The  {\em dyadic odometer} is the map $O: \Sigma_2 \to \Sigma_2$ defined by
\begin{equation*}
O((\underbrace{1,\dots, 1,}_{k\textrm{-times}}0,w_{k+2},w_{k+3},\dots))=(\underbrace{0,\dots, 0,}_{k\textrm{-times}}1,w_{k+2},w_{k+3},\dots),
\end{equation*}
and $O((1,1, \dots))=(0,0, \dots )$. 
\end{definition}

The map $O$ is also known as von Neumann-Kakutani odometer, adding machine or $2-$odometer. The dynamics of this system is well understood, see \cite[pp.25-26]{aa} and \cite[Section 4.5.2]{b}. We enumerate a sample of properties:

\begin{proposition} \label{erg_odo}
The dyadic odometer satisfies:
\begin{enumerate}
\item \label{1} For every $n \in \N$ and $w \in \Sigma_2$ we have,
\begin{equation*}
\left\{ ((O^k w)_1, \dots , (O^k w)_n ): 0 \leq k \leq 2^n -1 \right\} = \{0,1\}^n.
\end{equation*}
 \item The map $O$ is minimal.
 \item The map $O$ is uniquely ergodic and its unique invariant measure is the $(1/2, 1/2)-$Bernoulli measure.
\item The map $O$ has zero topological entropy.
\item The map $O$ has purely discrete spectrum.
\item The map $O$ is equicontinuous.
\end{enumerate}
\end{proposition}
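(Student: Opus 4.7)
The plan is to identify $(\Sigma_2,O)$ with the compact abelian group of $2$-adic integers $(\Z_2,T)$ under the translation $T(x)=x+1$, and then derive all six items from standard facts about translations on compact metric abelian groups.

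Concretely, define $\phi\colon\Sigma_2\to\Z_2$ by $\phi(w)=\sum_{i\ge 1}w_i\,2^{i-1}$, where the series converges $2$-adically. A length-$n$ cylinder $[w_1,\dots,w_n]$ is mapped bijectively onto the coset $\phi(w)+2^n\Z_2$, so $\phi$ is a homeomorphism. The carry-over rule defining $O$ is exactly $2$-adic addition of $1$, hence $\phi\circ O=T\circ\phi$; the special assignment $O(1,1,\dots)=(0,0,\dots)$ is consistent with $-1+1=0$ in $\Z_2$. Under $\phi$, the $(1/2,1/2)$-Bernoulli measure pulls back to the normalised Haar measure $\mu$ on $\Z_2$, since both put mass $2^{-n}$ on every length-$n$ cylinder (respectively coset of $2^n\Z_2$).

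With this conjugacy in place, each item is short. For (1), the first $n$ coordinates of $O^k w$ are determined by $\phi(w)+k\pmod{2^n}$, and as $k$ ranges over $\{0,1,\dots,2^n-1\}$ this residue exhausts $\Z_2/2^n\Z_2$, so the $n$-prefixes cover $\{0,1\}^n$. Minimality (2) is immediate from (1), since every cylinder is entered within $2^n$ iterations by every orbit. Unique ergodicity (3) follows from the classical fact that a minimal translation on a compact abelian group is uniquely ergodic with Haar measure as its sole invariant probability; transporting back by $\phi^{-1}$ gives the $(1/2,1/2)$-Bernoulli measure. Equicontinuity (6) is the general statement that a translation on a compact metrisable group, equipped with a translation-invariant compatible metric, is an isometry. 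Zero topological entropy (4) follows from equicontinuity, since an equicontinuous self-map of a compact metric space has $\htop=0$. Pure discrete spectrum (5) is the abelian Peter--Weyl theorem: the continuous characters of $\Z_2$ form an $L^2(\mu)$-orthonormal basis of eigenfunctions of the Koopman operator of $T$, with $\chi$ an eigenfunction of eigenvalue $\chi(1)$.

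The main obstacle is essentially bookkeeping: rigorously checking $\phi\circ O=T\circ\phi$ while tracking sequences that end in an infinite string of $1$'s, where the ``infinite carry'' corresponds to the passage from $-1$ to $0$ in $\Z_2$. Once the conjugacy is set up, each assertion of the proposition is a textbook consequence, so the write-up would concentrate on establishing the conjugacy and on the clean derivation of (1), treating (2)--(6) as brief consequences.
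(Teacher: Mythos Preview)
Your proposal is correct. However, the paper does not actually prove this proposition: the sentence preceding it reads ``The dynamics of this system is well understood, see \cite[pp.25--26]{aa} and \cite[Section 4.5.2]{b}. We enumerate a sample of properties,'' and no proof follows. So there is nothing to compare against; you have supplied an argument where the authors simply cite standard references.

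That said, your route via the topological conjugacy $\phi\colon\Sigma_2\to\Z_2$ with the translation $x\mapsto x+1$ is exactly the classical one found in those references, and your derivations of (1)--(6) are sound. The only minor comment is on ordering and logical dependence: for item~(4) you invoke the fact that equicontinuous maps of compact metric spaces have zero topological entropy, which is fine, but you might alternatively derive it directly from~(1), since~(1) says the number of distinct $n$-prefixes along any orbit segment of length $2^n$ is exactly $2^n$, whence the $(n,\varepsilon)$-separated sets grow at most linearly in the orbit length for each fixed cylinder scale. Either way the claim is immediate. Your handling of the ``infinite carry'' at $(1,1,1,\dots)$ is the right point to flag, and the identification $\phi(1,1,\dots)=-1$ in $\Z_2$ resolves it cleanly.
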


%

Let $\N$ be the set of natural numbers, $\N_0= \N \cup\{0\}$ and $\Sigma_{\geq 0}= \N_0^{\N}$. Endow $\N_0$ with the discrete topology and $\Sigma_{\geq 0}$ with the product topology. The space $\Sigma_{\geq 0}$ is not compact nor locally-compact. As in descriptive set theory, the space $\Sigma_{\geq 0}$ is called  \emph{Baire set}.

\begin{definition} \label{def_odo}
The $0-$\emph{odometer} is the map $O_0: \Sigma_{\geq 0} \to \Sigma_{\geq 0}$ defined by
\begin{equation*}
O_0( (w_1, w_2, w_3,  \dots) )=(\underbrace{0,\dots,0,}_{w_1\textrm{-times}}(w_2+1), w_3, \dots).
\end{equation*}
\end{definition}

We will prove that the map $O_0$ is topologically conjugated to the restriction of  $O$ to an appropriate non-compact subset of $\Sigma_2$. Let 
\begin{eqnarray*}
\Sigma_{\bar1}= \left\{(w_i)_i \in \Sigma_2 : \text{ there exists } k \in \N \text{ s.t. for every } i \geq k, w_i=1 \right\}, &\\
\Sigma_{\bar0}= \left\{(w_i)_i \in \Sigma_2 : \text{ there exists } k \in \N \text{ s.t. for every } i \geq k, w_i=0 \right\}.\end{eqnarray*}
Also consider $\Sigma_{2,0}=\Sigma_2 \setminus \Sigma_{\bar1}$ and endow it with the induced topology.

\begin{theorem} \label{main_odo}
There exits an homemorphism $\eta: \Sigma_{2,0} \mapsto \Sigma_{\geq 0}$ such that
\begin{equation*}
O_0 \circ \eta = \eta \circ O.
\end{equation*}
 \end{theorem}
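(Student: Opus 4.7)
The plan is to construct $\eta$ explicitly by reading a binary sequence as a concatenation of blocks of consecutive $1$'s separated by single $0$'s. Given $v\in\Sigma_{2,0}$, the condition $v\notin\Sigma_{\bar 1}$ forces $v$ to contain infinitely many $0$'s, so $v$ admits a unique decomposition
\begin{equation*}
v = 1^{a_1}\, 0\, 1^{a_2}\, 0\, 1^{a_3}\, 0\, \cdots, \qquad a_i\in\N_0,
\end{equation*}
and I set $\eta(v)=(a_1,a_2,a_3,\dots)\in\Sigma_{\geq 0}$. The putative inverse sends $(a_1,a_2,\dots)\in\Sigma_{\geq 0}$ to the same concatenation $1^{a_1}\,0\,1^{a_2}\,0\,\cdots$, which lies in $\Sigma_{2,0}$ because every $a_i$ is finite, and bijectivity is then immediate.

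To see that $\eta$ is a homeomorphism I would check continuity and openness on cylinder bases. A cylinder $[w_1,\dots,w_n]\subseteq\Sigma_{\geq 0}$ pulls back under $\eta$ to $[1^{w_1}\, 0\, 1^{w_2}\, 0\, \cdots\, 1^{w_n}\, 0]\cap\Sigma_{2,0}$, which is open. In the other direction, the image under $\eta$ of a cylinder $[v_1,\dots,v_N]\cap\Sigma_{2,0}$ is obtained by parsing $v_1\cdots v_N$ into a succession of full $1$-blocks ending in $0$, plus possibly an unfinished trailing run of $1$'s; this parsing fixes the values of some initial coordinates $w_1,\dots,w_m$ and, at worst, forces $w_{m+1}$ to exceed a given threshold, so the image is a (countable) union of cylinders and hence open.

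The conjugacy is then a direct computation. If $v=1^{a_1}\,0\,1^{a_2}\,0\,1^{a_3}\,0\,\cdots$ then
\begin{equation*}
O(v) = 0^{a_1}\cdot 1\cdot 1^{a_2}\, 0\, 1^{a_3}\, 0\, \cdots = 0^{a_1}\, 1^{a_2+1}\, 0\, 1^{a_3}\, 0\, \cdots,
\end{equation*}
whose block decomposition starts with $a_1$ empty blocks (one per leading $0$), then a block of length $a_2+1$, and then the unchanged tails $a_3,a_4,\dots$. Therefore $\eta(O(v))=(\underbrace{0,\dots,0}_{a_1\text{ entries}},a_2+1,a_3,\dots)$, and this coincides with $O_0(\eta(v))$ by Definition~\ref{def_odo}. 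The main obstacle is bookkeeping: one has to handle the degenerate situations ($a_1=0$, initial fragments ending mid-block, and the fixed sequence $(0,0,\dots)$) uniformly by the same formulas, and note that discarding $\Sigma_{\bar 1}$ from $\Sigma_2$ is precisely what guarantees that the block decomposition terminates infinitely often and that $\Sigma_{2,0}$ is forward-invariant under $O$.
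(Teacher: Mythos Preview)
Your proof is correct and follows essentially the same approach as the paper: the paper introduces blocks $b_k=\underbrace{1,\dots,1}_{k},0$ (with $b_0=0$), writes each $v\in\Sigma_{2,0}$ uniquely as $(b_{k_1},b_{k_2},\dots)$, sets $\eta(v)=(k_1,k_2,\dots)$, and verifies the conjugacy via the identity $1,b_k=b_{k+1}$, which is exactly your decomposition $v=1^{a_1}0\,1^{a_2}0\cdots$ and your computation. If anything, you give more detail than the paper on why $\eta$ is a homeomorphism.
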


\begin{proof}
In order to prove this result it will be convenient to recode the space $\Sigma_{2,0}$. For every integer $k \geq 0$ we the define the block
\begin{eqnarray} \label{bk}
b_{k}=
\begin{cases}
0 & \text{ if  } k=0,  \\
\underbrace{1, \dots, 1}_{k\textrm{-times}},0 & \text{ if } k \geq 1.
\end{cases}
\end{eqnarray}
Every sequence $(w_i)_i \in \Sigma_{2,0}$ can be uniquely written as an infinite sequence in the alphabet $\{b_k :k\geq 0\}$. For example,
\begin{equation*}
(0,0,1,1,1,0,1,0,1,1,1,1,1,0,1,0,\dots)=(b_0,b_0,b_3,b_1,b_5,b_1,\dots).
\end{equation*}
Note that the  map $\eta: \Sigma_{2,0}\mapsto\Sigma_{\geq 0}$ defined by 
\begin{equation*}
\eta((w_1,w_2, \dots))= \eta (b_{k_1}, b_{k_2}, \dots)= (k_1,  k_2,  \dots),
\end{equation*}
is an homeomorphism.  The following holds
 \begin{equation}
(\eta\circ O \circ \eta^{-1}) (k_1, k_2, k_3, \dots)=(\underbrace{0,\dots,0,}_{k_1\textrm{-times}}(k_2+1),k_3,  \dots).
 \end{equation}
Indeed,  we have $\eta^{-1}(k_1, k_2, \dots)=(\underbrace{1,\dots, 1,}_{k_1\textrm{-times}}0, b_{k_2}, \dots)$. Thus, 

\[
(\eta\circ  O \circ \eta^{-1})\left(\underbrace{1,\dots, 1,}_{k_1\textrm{-times}}0, b_{k_2},\dots \right)=\left( \underbrace{0,\dots, 0}_{k_1\textrm{-times}},1,b_{k_2},\dots\right), 
\]
and the conclusion follows from the fact that $1,b_k=b_{k+1}$. That is, 
\begin{equation*}
O_0= (\eta\circ O \circ \eta^{-1}). 
\end{equation*}
\end{proof}
%

\begin{remark}
For every $k \in \N$, let $\N_k:=\{ n \in \N : n \geq k\}$ and $\Sigma_{\geq k}=\N_k^{\N}$ endowed with the product topology. The map  $O_k: \Sigma_{\geq k} \mapsto \Sigma_{\geq k}$ defined by
\begin{equation*}
O_k((w_1, w_2, w_3, \dots))=(\underbrace{k,\dots,k,}_{(w_1-k)\textrm{-times}}(w_2+1), w_3, \dots).
\end{equation*}
is topologically conjugated to $O_0$
\end{remark}
The following \emph{odometric} property follows from Theorem \ref{main_odo} and  Proposition \ref{erg_odo}.

\begin{remark}\label{barrido}
Given an infinite word $\omega=(w_1, w_2, w_3, \dots) \in \Sigma_{\geq 0}$ and any finite word  $(\bar{w_1}, \bar{w_2}, \dots, \bar{w_k} )$, with $\bar w_j\in \N$, there exists $n>0$ such that $O_0^n(\omega)=(\bar{w_1}\, \bar{w_2}, \dots, \bar{w_k}, \dots)$. That is, given a finite word, any infinite word has an iterate that starts exactly as the given finite word.   
\end{remark}

\begin{definition}
For every $k \in \N$, the \emph{shift} map is the transformation $\sigma:   \Sigma_{\geq k} \mapsto \Sigma_{\geq k}$ defined by
\begin{equation*}
\sigma((w_1, w_2, w_3 \dots))=(w_2, w_3 \dots).
\end{equation*}
\end{definition}

This map is somehow at the opposite dynamical end of the odometer. Indeed, it has infinite entropy, it is topologically mixing, and for every $n \in \N$ it has countable many periodic orbits of prime period $n$.

\subsection{Renormalization Property} \label{reno} The odometer and the shift map satisfy a renormalization property analogous to that satisfied by the geodesic and horocycle flows in the unit tangent bundle of a hyperbolic surface   (see \cite[Property 3.3 p.119]{da} or \cite{ss}). Indeed, denote by $\sigma_2: \Sigma_2 \to \Sigma_2$ the shift map on $\Sigma_2$. For every $m,n \in \N \cup \{0\}$ and $w \in \Sigma_2$  we have
\begin{equation} \label{g-h}
\left(O^m \circ \sigma_2^n \right) (w) =\left( \sigma_2^n \circ O^{m 2^n} \right) (w).
\end{equation}
The number $2$ in the exponent of $O$ in equation \eqref{g-h} is related to the fact that the topological entropy of  $\sigma_2$ is $\log 2$. In our next result we establish a formula analogous to that in equation \eqref{g-h} for the odometer $O_0$ and the full-shift on a countable alphabet  $\sigma: \Sigma_{\geq 0} \to \Sigma_{\geq 0}$. We stress that the later system has infinite entropy and that, in this case, the relation depends upon the point where it is evaluated. 

\begin{proposition}
For every $w=(k_1, k_2, \dots) \in \Sigma_{\geq 0}$ and $m,n \in \N \cup \{0\}$, we have
\begin{equation}
\left(O_0^m \circ \sigma^n\right)(w) = \left(\sigma^n \circ O_0^{m 2^n 2^{\sum_{i=1}^n k_i}} \right)(w).
\end{equation}\end{proposition}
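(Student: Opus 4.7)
The plan is to pull the identity back to the compact dyadic odometer via the conjugacy $\eta$ of Theorem \ref{main_odo}, invoke the renormalization formula \eqref{g-h} that is already available there, and push the result forward. The preparatory observation is the effect of the shift under $\eta$: since $\eta^{-1}(w)=(b_{k_1},b_{k_2},\dots)$ and the block $b_k$ has length $k+1$ by \eqref{bk}, a single iteration of $\sigma$ on $\Sigma_{\geq 0}$ corresponds to erasing $k_1+1$ binary symbols on $\Sigma_{2,0}$. Iterating this, I obtain $\eta^{-1}\circ\sigma^n(w)=\sigma_2^{K_n}\circ\eta^{-1}(w)$, where $K_n:=n+\sum_{i=1}^n k_i$. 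Note that $K_n$ depends on $w$ through its first $n$ coordinates; this point-dependence is what creates the only real subtlety in the proof.

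With this in hand, I substitute into the left-hand side: using $O_0^m=\eta\circ O^m\circ\eta^{-1}$ and the preparatory identity,
\[
O_0^m\circ\sigma^n(w)=\eta\circ O^m\circ\sigma_2^{K_n}\circ\eta^{-1}(w)=\eta\circ\sigma_2^{K_n}\circ O^{m\cdot 2^{K_n}}\circ\eta^{-1}(w),
\]
where the second equality is \eqref{g-h}. Since $2^{K_n}=2^n\cdot 2^{\sum_{i=1}^n k_i}$, the exponent already matches that of the claim, so what remains is purely a matter of re-interpreting the outer composition.

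The main obstacle will be this final step: recognising the right-hand side above as $\sigma^n\circ O_0^{m\cdot 2^{K_n}}(w)$. Because the block-to-binary shift identity is pointwise, to apply it at the new point $O_0^{m\cdot 2^{K_n}}(w)$ I need its first $n$ coordinates to coincide with $(k_1,\dots,k_n)$, so that the \emph{same} integer $K_n$ is produced. I will resolve this by invoking Proposition \ref{erg_odo}\eqref{1} with $N=K_n$: that statement implies that $O^{2^{K_n}}$, and hence every power of it, preserves the first $K_n$ binary coordinates of $\eta^{-1}(w)$, which are precisely the concatenation $b_{k_1}b_{k_2}\cdots b_{k_n}$ of length $K_n$. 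Translating back through $\eta$, this says that $O_0^{m\cdot 2^{K_n}}(w)$ still starts with $(k_1,\dots,k_n)$; the preparatory identity then applies at this new point with the same value of $K_n$, and one final application of $\eta$ delivers the claimed equality.
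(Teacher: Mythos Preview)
Your proof is correct and follows the same route as the paper: pull the identity back to $\Sigma_{2,0}$ via $\eta$, apply the dyadic renormalization \eqref{g-h}, and use the acceleration formula \eqref{acce} relating $\sigma$ and $\sigma_2$. You are in fact more careful than the paper at the final step, explicitly justifying via Proposition~\ref{erg_odo}\eqref{1} that $O_0^{m\cdot 2^{K_n}}(w)$ has the same first $n$ coordinates as $w$ (so that the same $K_n$ applies on the way back); the paper's proof simply says ``combining the equations'' there.
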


\begin{proof}
Recall that, restricted to the appropriate non-compact subspace, we have $O= \eta^{-1} \circ  O_0 \circ \eta$. Replacing in equation \eqref{g-h} we obtain, for every $(k_1, k_2, \dots) \in \Sigma_{\geq 0}$ and $m,n \in \N \cup \{0\}$, that
\begin{equation} \label{nose}
O_0 \circ (\eta \circ \sigma_2^n \circ \eta^{-1}) = (\eta \circ \sigma_2^n \circ \eta^{-1}) \circ O_0^{m 2^n}.
\end{equation}
The map $\sigma$ can be thought of as the \emph{accelerated} version of $\sigma_2$ in the following sense:
let $w=(k_1, k_2, k_3, \dots ) \in \Sigma_{\geq 0}$, then
\begin{equation} \label{acce}
(k_2,k_3, \dots)= \sigma(w)=  (\eta \circ \sigma_2^{k_1+1} \circ \eta^{-1})(w).
\end{equation}
Combining equations \eqref{g-h}, \eqref{nose} and \eqref{acce}, we obtain the desired result.
\end{proof}

\section{Counting finite words} \label{f}

In this section we extend the action of the odometer to finite words. We prove that arranging  them in appropriate binary tree, the extension of the odometer runs through the tree from left-right and top-down. Identifying finite words with rational numbers by means of certain continued fractions, we recover  well known binary trees made out of rational numbers, such as  the Kepler tree or the Calkin and Wilf tree. Moreover, this extension of the odometer to finite words allow us to recover, in a unified way, several counting results for the rational numbers.

%

%
%

\subsection{Odometric binary tree} We will represent the set of {\it finite words}, that we denote by $\Omega_1= \bigcup_{j \geq 1} \N_1^j$, as a binary tree.

\begin{definition}  The {\it odometric binary tree}, denoted by $\mathcal{T}_1$, is a tree defined by the following rules:
\begin{enumerate}
\item The {\it leafs} of $\mathcal{T}_1$   are the elements in $\Omega_1$. 
\item Every leaf $\omega=(w_1, w_2, \dots, w_j) \in \Omega_1$ has two {\it sons} displayed in the following order:
\[
\omega^1=(1, w_1, w_2, \dots, w_j) \quad \textrm{and}\quad \omega^{+1}=(w_1+1, w_2, \dots, w_j).
\] 
\item The root of the tree is $(1) \in \Omega_1$. 
\end{enumerate}
Note that, apart from $(1)$, every leaf  is the son of exactly one leaf. The    {\it level} $1$ of the tree, denoted by $\mathcal{T}^1_1$, contains $(1)$ as its only element. Inductively, for $l \in \N$, we say that sons of a leaf  at  level $\mathcal{T}_1^l$  are at the level $\mathcal{T}_1^{l+1}$. 
\end{definition}

The first four levels of the odometric binary tree are displayed in Figure \ref{tree}.

\begin{figure}
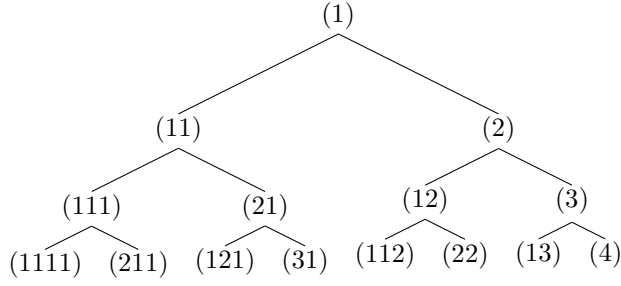

 \Tree [.(1) [.(11) [.(111) [.(1111) ] [.(211) ] ] [.(21) [.(121) ] [.(31) ] ] ] [.(2) [.(12) [.(112) ] [.(22) ] ] [.(3) [.(13) ] [.(4)  ] ] ]]
\centering
 \caption{The first four levels of the odometric binary tree or the  $\mathcal{T}_1$ tree.}
  \label{tree}
\end{figure}

\begin{definition}
For a finite word  $ \omega=(w_1, w_2, \dots, w_j)$, we define its \emph{sum}, that we denote $s(\omega)$,  by
\[
s(\omega)=w_1+w_2+\dots+w_j.
\]
For $l \in \N$, denote by $\Omega_1^l =\{ \omega \in   \Omega_1: s(\omega)= l \}$.
\end{definition}

\begin{lemma}\label{prop_tree_1}
The {odometric binary tree} verifies:
\begin{enumerate}
\item Every element $\omega\in \Omega_1$ appears exactly once as a leaf in the odometric binary tree.
\item For $l\geq 1$, we have $\mathcal{T}_1^l= \Omega_1^l$.
\end{enumerate}
\end{lemma}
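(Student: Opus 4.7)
My plan is to prove (1) and (2) simultaneously by induction on the level $l$, using a carefully defined \emph{parent} operation on $\Omega_1$.

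The starting observation is a direct computation: for any leaf $\omega=(w_1,\dots,w_j)\in\Omega_1$,
\[
s(\omega^1)=1+w_1+\cdots+w_j=s(\omega)+1=(w_1+1)+w_2+\cdots+w_j=s(\omega^{+1}).
\]
Since the root $(1)$ has sum $1$ and sits at level $1$, this immediately gives the inclusion $\mathcal{T}_1^l\subseteq \Omega_1^l$ by a trivial induction.

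For the reverse inclusion (and for uniqueness) I introduce the parent map. Given $\bar\omega=(\bar w_1,\dots,\bar w_j)\in\Omega_1$ with $s(\bar\omega)\ge 2$, I set
\[
p(\bar\omega)=\begin{cases}(\bar w_2,\dots,\bar w_j) & \text{if } \bar w_1=1, \\ (\bar w_1-1,\bar w_2,\dots,\bar w_j) & \text{if } \bar w_1\ge 2.\end{cases}
\]
I need to verify four things: $(i)$ $p(\bar\omega)\in\Omega_1$ (when $\bar w_1=1$ the condition $s(\bar\omega)\ge 2$ forces $j\ge 2$, so the tail is nonempty; when $\bar w_1\ge 2$, the new first entry $\bar w_1-1\ge 1$); $(ii)$ $s(p(\bar\omega))=s(\bar\omega)-1$; $(iii)$ $\bar\omega$ is a son of $p(\bar\omega)$ (by the case analysis, $\bar\omega=p(\bar\omega)^1$ in the first case and $\bar\omega=p(\bar\omega)^{+1}$ in the second); and $(iv)$ the parent is the \emph{only} leaf of which $\bar\omega$ can be a son, because the two son-operations are distinguishable from $\bar\omega$ itself: $\omega^1$ always starts with $1$ while $\omega^{+1}$ always starts with an integer $\ge 2$, so the type of son is read off from $\bar w_1$, and then $p(\bar\omega)$ is forced.

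With $p$ in hand the inductive step is immediate. Assume $\mathcal{T}_1^l=\Omega_1^l$ with each element appearing once. Each leaf at level $l$ produces two sons, both with sum $l+1$, so $\mathcal{T}_1^{l+1}\subseteq \Omega_1^{l+1}$. Conversely, any $\bar\omega\in\Omega_1^{l+1}$ satisfies $s(\bar\omega)=l+1\ge 2$, so $p(\bar\omega)\in\Omega_1^l=\mathcal{T}_1^l$ and hence $\bar\omega\in\mathcal{T}_1^{l+1}$, giving equality. Uniqueness at level $l+1$ follows from $(iv)$ together with the inductive uniqueness at level $l$. Finally, since the levels $\mathcal{T}_1^l$ are pairwise disjoint (they correspond to different values of the sum), each $\omega\in\Omega_1$ appears at exactly one level, namely $s(\omega)$, and only once within that level, which is precisely (1). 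The only step requiring care is the edge-case bookkeeping around the root — verifying that the parent operation is well-defined whenever $s(\bar\omega)\ge 2$ and that $(1)$ genuinely has no parent — which is handled by the length-check mentioned in $(i)$; I expect this to be the only potential slip in an otherwise routine induction.
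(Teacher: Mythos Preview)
Your proof is correct, but it takes a different route from the paper's. The paper argues by cardinality: it first shows directly that $|\Omega_1^l|=2^{l-1}$ (any word with sum $l$ is obtained from a string of $l$ ones by choosing, at each of the $l-1$ gaps, whether to insert a comma or a plus sign), then observes from the son rule that $\mathcal{T}_1^l\subseteq\Omega_1^l$, and finally notes that a binary tree has exactly $2^{l-1}$ leaves at level $l$, so the inclusion is an equality; part (a) then follows from (b) by the same cardinality match. Your argument instead constructs an explicit parent map $p$ inverting the son operations and runs a direct induction, never computing $|\Omega_1^l|$.

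Each approach has its merits. The paper's is shorter and exhibits the pleasant combinatorial identity that compositions of $l$ number $2^{l-1}$, which also foreshadows the later bijection with binary strings via the blocks $\hat b_k$. Your approach is more self-contained and constructive: the parent map $p$ is exactly what one needs to walk back up the tree, and your proof would go through unchanged in settings where the level sets are infinite and no cardinality argument is available. The key structural fact in both proofs is the same --- that the two son types are distinguishable from the first coordinate of the child --- but you use it to prove injectivity directly, while the paper lets the counting do that work.
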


\begin{proof}
We first prove (b). Note that, for  $l\geq 1$, the set $\Omega_1^l =\{ \bar\omega \in   \Omega_1: s(\bar\omega)= l \} $  has cardinality $2^{l-1}$. Indeed, any element with sum $l$ can be obtained from a list $1 \ 1 \dots 1 \ 1$ of length $l$, by intercalating  a $"+"$ symbol or a colon $","$ symbol at each  of the $(l-1)$ places between the ones, and performing the corresponding sums. Due to the sons rule, given $\omega \in \mathcal{T}_1^l$, we have $s(\omega^1)=s(\omega^{+1})=s(\omega)+1$. Hence, $\mathcal{T}^l_1\subset \Omega_1^l$. Since the tree is binary, and $\mathcal{T}_1^1$ has $1$ element, the cardinality of $\mathcal{T}_1^l$ is $2^{l-1}$, which implies $\mathcal{T}_1^l=\Omega_1^l$, concluding (b). For $\omega \in \Omega_1$, part (b) implies that $\omega \in \mathcal{T}_1^{s(\omega)}$. This fact, together with a cardinality argument, allow us to obtain (a).
\end{proof}

\begin{definition}
The {\it reverse lexicographic order} $<_{\Omega_1}$ on $\Omega_1$ is defined by:
\begin{enumerate}
\item For $\omega, \bar \omega\in \Omega_1$, if $s(\omega)<s(\bar \omega)$ then $\omega<_{\Omega_1}{\bar \omega}$.
\item If $s(w_1, w_2, \dots, w_j)=s(\bar w_1, \bar w_2, \dots, \bar w_ r)$ then  $\omega<_{\Omega_1}{\bar \omega}$ provided that
\[
w_j=\bar w_r, \ w_{j-1}=\bar w_{r-1}\ , \dots , w_{j-t-1}=\bar w_{r-t-1}, \ \textrm{and } w_{j-t}<\bar w_{r-t},
\]
for some $t \in \N$ with $j>t$ and $r>t$.
\end{enumerate}
\end{definition}

Note that this is well defined, the strict inequality in (b) occurs, otherwise both sums should be different.  An example of the order is $(1, 1, 3)<_{\Omega_1}(4, 2)$ and $(2, 2, 2, 1)<_{\Omega_1}(4, 2, 1)$. Note that the reverse lexicographic order $<_{\Omega_1}$ is a total order on $\Omega_1$.  
 
Given an integer $k\geq 0$, we define the block $\hat b_k=0 \underbrace{1 \dots 1}_{k-\textrm{times}}$. Note that $\hat b_k$ has length $k+1$ and that is the complementary version of the block $b_k$, defined in \eqref{bk}. To every $\omega \in \Omega_1$ we associate a positive integer $n_{\omega}$, which in binary notation corresponds to:
\[
n_{\omega}=\hat b_{w_j-1}\hat b_{w_{j-1}-1} \dots \hat b_{w_1-1}.
\] 
For instance, to  $\omega=(4, 2, 1)$ we associate  $n_{\omega}=0\ 01 \ 0111$, which in decimal notation is $n_{\omega}=23$. Note that $n_{(s)}=2^{s-1}-1$ and $n_{(1, \dots, 1)}=0$.

\begin{lemma}
Let $\omega , \bar\omega \in \Omega_1$. We have that, $\omega<_{\Omega_1}\bar \omega$ if and only if $2^{s(\omega)-1}+n_{\omega}<2^{s(\bar \omega)-1}+n_{\bar \omega}$.
\end{lemma}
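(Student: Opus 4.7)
The plan is to make the integer $2^{s(\omega)-1}+n_\omega$ transparent by writing out its base-$2$ expansion explicitly, and then translate both sides of the equivalence into assertions about this expansion. Writing $N(\omega):=2^{s(\omega)-1}+n_\omega$ for brevity, I would first verify the key identity: the definition of $n_\omega$ as the concatenation $\hat b_{w_j-1}\hat b_{w_{j-1}-1}\cdots\hat b_{w_1-1}$ produces a binary string of total length $\sum_{i=1}^j w_i=s(\omega)$ starting with the leading~$0$ of $\hat b_{w_j-1}$, so adding $2^{s(\omega)-1}$ simply flips that leading bit to a~$1$, merging it with the $w_j-1$ ones that follow. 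Hence the base-$2$ expansion of $N(\omega)$ is the length-$s(\omega)$ string
$$\underbrace{1\cdots 1}_{w_j}\,0\,\underbrace{1\cdots 1}_{w_{j-1}-1}\,0\,\cdots\,0\,\underbrace{1\cdots 1}_{w_1-1}.$$
In particular $2^{s(\omega)-1}\le N(\omega)<2^{s(\omega)}$, so $N(\omega)$ has exactly $s(\omega)$ binary digits.

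The equivalence then splits into two cases. If $s(\omega)\neq s(\bar\omega)$, assume without loss of generality that $s(\omega)<s(\bar\omega)$; the bracketing above gives $N(\omega)<2^{s(\omega)}\le 2^{s(\bar\omega)-1}\le N(\bar\omega)$, matching the first clause of the reverse lexicographic definition. In the remaining case $s(\omega)=s(\bar\omega)=:s$, both $N(\omega)$ and $N(\bar\omega)$ have $s$-bit binary expansions, and their numerical comparison is determined by the leftmost position at which these expansions differ.

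The final step is to identify this leftmost binary disagreement with the first disagreement in the reverse lexicographic comparison. Reading $\omega$ from the last entry $w_j$ backward toward $w_1$ corresponds, via the explicit expansion above, to reading the bits of $N(\omega)$ from most significant to least significant: the reversed entry at offset $i$ is encoded as a run of $w_j$ (for $i=0$) or $w_{j-i}-1$ (for $i\ge 1$) consecutive ones, with runs separated by single zeros. Equality $w_{j-i}=\bar w_{r-i}$ for $0\le i\le t-1$ therefore corresponds to coincidence of a common prefix of the two expansions, and the first strict inequality $w_{j-t}<\bar w_{r-t}$ forces the corresponding run in $N(\omega)$ to terminate with a~$0$ at a bit position where $N(\bar\omega)$ still continues with a~$1$, yielding $N(\omega)<N(\bar\omega)$; the converse direction is the same decoding read backwards.

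The one piece of bookkeeping I expect to be slightly delicate is the asymmetry of the leading run, whose length is $w_j$ rather than $w_j-1$ because the added $2^{s(\omega)-1}$ absorbs the leading zero of $\hat b_{w_j-1}$. Once this is isolated, the subsequent runs are handled uniformly by the $w_{j-i}-1$ rule and the correspondence between the two orderings becomes a direct translation between an $\omega$-encoded run-length decomposition and the binary expansion of $N(\omega)$.
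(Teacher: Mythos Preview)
Your proposal is correct and follows the same two-case split as the paper: first dispose of the unequal-sum case via the bracketing $2^{s(\omega)-1}\le N(\omega)<2^{s(\omega)}$, then handle equal sums by comparing binary expansions. The paper's own proof is extremely terse --- for the equal-sum case it literally says ``the definitions allow us to conclude'' --- so your explicit identification of the binary expansion of $N(\omega)$ as a run-length encoding of the reversed word, and the bit-by-bit comparison that follows, is not a different route but rather the argument the paper leaves to the reader, carried out in full.
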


\begin{proof}
Note that $n_{\omega}\in \{0, 1, \dots, 2^{s(\omega)-1}-1\}$, which implies $2^{s(\omega)-1}+n_{\omega}<2^{r}$ for any $r\geq s(\omega)$. When $s(\omega)=s(\bar \omega)$, the definitions allow us to conclude.
\end{proof}

We can now produce a more refined version of Lemma \ref{prop_tree_1}.

\begin{proposition}\label{prop_tree_2}
The odometric binary tree verifies:
\begin{enumerate}
\item The leafs in the level $\mathcal{T}_1^l$ appears from left-rightin the order induced by $<_{\Omega_1}$ on $\Omega_1^l$. In particular, the leftmost leaf is $(\underbrace{1,\dots, 1}_{l-\textrm{times}})$ and the rightmost leaf is $(l)$. 
\item The finite word $\omega=(w_1, w_2, \dots, w_j)$ corresponds to the leaf in the $n_{\omega}$ position of the level $\mathcal{T}_1^{s(\omega)}$. 
\end{enumerate}
\end{proposition}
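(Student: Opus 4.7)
The plan is to prove both statements simultaneously by induction on the level $l$, with the lemma preceding the proposition as the key bridge between the combinatorial ordering and the integer labeling.

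First I would establish a simple base case. Level $\mathcal{T}_1^1$ contains the single word $(1)$, for which $s((1))=1$ and $n_{(1)}=\hat b_0=0$, so it sits in position $0$ of level $1$ trivially.

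The central step of the induction is the following arithmetic observation about the two sons of an arbitrary leaf $\omega=(w_1,\ldots,w_j)\in\Omega_1^l$: namely
\[
n_{\omega^1}=2\,n_\omega\qquad\text{and}\qquad n_{\omega^{+1}}=2\,n_\omega+1.
\]
Both identities are read off directly from the definition $n_\omega=\hat b_{w_j-1}\hat b_{w_{j-1}-1}\cdots\hat b_{w_1-1}$. For the left son, prepending $1$ to $\omega$ appends an extra block $\hat b_0=0$ at the rightmost end of the binary string, which is multiplication by $2$. For the right son, replacing $w_1$ by $w_1+1$ replaces the last block $\hat b_{w_1-1}=0\underbrace{1\cdots1}_{w_1-1}$ by $\hat b_{w_1}=0\underbrace{1\cdots1}_{w_1}$, which is the same as appending a single $1$ to the binary representation.

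Granting this, the inductive step writes itself. By the induction hypothesis, the $2^{l-1}$ leaves of $\mathcal{T}_1^l$ occupy positions $0,1,\ldots,2^{l-1}-1$ labeled by their $n_\omega$ values, in the order induced by $<_{\Omega_1}$. The sons rule, together with the two identities above, shows that the $2^l$ leaves of $\mathcal{T}_1^{l+1}$ therefore occupy positions $0,1,\ldots,2^l-1$ and each leaf $\sigma\in\Omega_1^{l+1}$ appears at position $n_\sigma$. Because all words at level $l+1$ have the same sum $l+1$, the preceding lemma reduces the reverse lexicographic order on $\Omega_1^{l+1}$ to the natural order on the integers $n_\sigma$, which matches the left-right order given by the binary tree. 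This proves (a) and (b) at level $l+1$, and the extremal statements in (a) follow by identifying the endpoints: $n_\omega=0$ forces every block $\hat b_{w_i-1}$ to be $\hat b_0$, so $\omega=(1,\ldots,1)$, while $n_\omega=2^{l-1}-1$ together with the block structure forces $j=1$ and $w_1=l$, giving $\omega=(l)$.

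The only delicate point, and the one I would write out with care, is the bit-manipulation lemma $n_{\omega^1}=2n_\omega$, $n_{\omega^{+1}}=2n_\omega+1$; once this is in hand the rest is a clean induction driven by the previous lemma. A minor bookkeeping issue to watch is that the concatenation defining $n_\omega$ begins with a $0$ (the leading bit of $\hat b_{w_j-1}$), so the effective binary length is at most $s(\omega)-1=l-1$, which is exactly what is needed for the positions $n_\omega$ to range over $\{0,\ldots,2^{l-1}-1\}$ without collision.
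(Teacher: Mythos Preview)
Your argument is correct, but it proceeds along a different axis than the paper's. The paper proves (a) first, by a purely combinatorial induction on the order $<_{\Omega_1}$: given $\omega<_{\Omega_1}\bar\omega$ at level $l$, it checks directly that $\omega^1<_{\Omega_1}\omega^{+1}<_{\Omega_1}\bar\omega^1<_{\Omega_1}\bar\omega^{+1}$, and then derives (b) from (a), Lemma~\ref{prop_tree_1}, and the preceding lemma. You do essentially the reverse: you prove (b) first via the binary identities $n_{\omega^1}=2n_\omega$ and $n_{\omega^{+1}}=2n_\omega+1$, using the standard fact that in a binary tree the children of the node in position $i$ sit in positions $2i$ and $2i+1$; then you invoke the preceding lemma to translate the integer order on $n_\sigma$ back into $<_{\Omega_1}$, recovering (a). Your route makes the positional-binary structure of the tree completely explicit and arguably explains \emph{why} $n_\omega$ is defined the way it is; the paper's route is a bit shorter and avoids having to state the two bit-shift identities, at the cost of being less transparent about the arithmetic content.
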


\begin{proof}
We proceed by induction on $l$. Assume that (a) holds for $l\geq 1$. Take $\omega=(w_1, \dots, w_j), \bar \omega=(\bar w_1, \dots, \bar w_r)$, such that $\omega<_{\Omega_1}\bar \omega$ and $s(\omega)=s(\bar\omega)=l$ (that is, $\omega,\bar \omega\in \mathcal{T}_1^l$). It is easy to see that $\omega^{1}<_{\Omega_1}\omega^{+1}$. Since $s(\omega)=s(\bar\omega)$ there exists $t>1$ such that $w_t<\bar w_t$, that leads to $\omega^{+1}<_{\Omega_1}\bar \omega^{1}$, concluding the inductive step, and hence (a). Part (b) follows from (a) and Lemma \ref{prop_tree_1}.
\end{proof}

 
%
%
%

\begin{remark}
Note that Proposition \ref{prop_tree_2}.(b) implies  that the odometric binary tree is a {\it Cartesian Tree}, as introduced by Vuillemin \cite{VUI}. 
\end{remark}

\subsection{Odometric action on finite words.} In this section we extend the action of the odometer $O_1$ to finite words. There is a natural way to define $O_1$ for words of length strictly greater than one in a way that the action is consistent with the order induced by  $<_{\Omega_1}$.  For words of length one there is a great deal of freedom. We will concentrate on two possible choices that will be useful later.  
%
%

\begin{definition} \label{odo_finite}
Let $(w_1, w_2, \dots , w_j)  \in \Omega_1$. If  $j>1$ we define the action of $O_1$ by
\begin{equation}\label{def_odo}
O_1(w_1, w_2, \dots , w_j) =(\underbrace{1, \dots, 1,}_{w_1-1 \text{ times}} w_2+1, w_3, \dots, w_j).
\end{equation}
If $j=1$ the action $O_1(w_1)$ can be defined arbitrarily. That is, it can be any finite word.  The two main examples we consider are:
\[
O_1(w_1)=(\underbrace{1, 1, \dots, 1}_{w_1-\textrm{times}}) \quad \text{and} \quad O_1(w_1)=(\underbrace{1, 1, \dots, 1}_{(w_1+1)-\textrm{times}}).
\]
We call the first example  {\it cyclic odometer} while the  former  {\it top-down odometer}.
\end{definition}


\begin{lemma} \label{lema_1_count}
Let $\omega=(w_1, w_2, \dots, w_j)$ be a finite word, with $j>1$. Then, when regarded as leafs of the odometric binary tree, the leaf $O_1(\omega)$ is the leaf located one position to the right of $\omega$ at the same level.
\end{lemma}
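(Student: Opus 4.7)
The plan is to invoke Proposition~\ref{prop_tree_2}: it suffices to check that $O_1(\omega)$ sits at the same level of $\mathcal{T}_1$ as $\omega$, and that its position index satisfies $n_{O_1(\omega)} = n_\omega + 1$.

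That the level is preserved is a one-line computation from \eqref{def_odo}:
\[
s(O_1(\omega)) = (w_1 - 1) + (w_2 + 1) + w_3 + \cdots + w_j = s(\omega),
\]
so both $\omega$ and $O_1(\omega)$ belong to $\mathcal{T}_1^{s(\omega)}$.

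For the index, I would read the binary encoding $n_\omega = \hat{b}_{w_j-1}\hat{b}_{w_{j-1}-1}\cdots\hat{b}_{w_1-1}$ from right to left and apply ordinary \emph{binary $+1$ with carry}. Since $\hat{b}_{w_1-1}$ ends (right to left) in $w_1-1$ consecutive $1$'s followed by its leading $0$, adding $1$ flips these $w_1-1$ ones to zeros and turns the subsequent $0$ into a $1$, halting the carry at that bit. Consequently the final two blocks $\hat{b}_{w_2-1}\hat{b}_{w_1-1}$ are replaced by $\hat{b}_{w_2}\,\underbrace{\hat{b}_0\cdots\hat{b}_0}_{w_1-1}$ (using $\hat{b}_0=0$), while the prefix $\hat{b}_{w_j-1}\cdots\hat{b}_{w_3-1}$ is untouched. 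This is precisely the binary encoding of $O_1(\omega)=(\underbrace{1,\ldots,1}_{w_1-1},w_2+1,w_3,\ldots,w_j)$, so $n_{O_1(\omega)} = n_\omega + 1$. Combined with Proposition~\ref{prop_tree_2}(a), which orders the leaves at level $\mathcal{T}_1^{s(\omega)}$ from left to right according to increasing index, this gives the conclusion.

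The only delicate bookkeeping is the boundary case $j=2$, where the common prefix $\hat{b}_{w_j-1}\cdots\hat{b}_{w_3-1}$ is empty; the same carry argument goes through verbatim, but one must verify that the carry cannot propagate beyond position $w_1-1$, which is automatic because the bit at that position is the leading $0$ of $\hat{b}_{w_1-1}$. Conceptually, the lemma exhibits that the extended action on finite words is conjugated, via $\omega \mapsto n_\omega$, to the ordinary ``add one in binary'' operation, which is the reason the terminology \emph{odometer} is consistent on the tree.
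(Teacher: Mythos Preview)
Your argument is correct, but it takes a different route from the paper's. The paper proves the lemma by induction on the level $l=s(\omega)$ using the son rules directly: it checks $O_1(\omega^1)=\omega^{+1}$ and $O_1(\omega^{+1})=O_1(\omega)^1$, so the two sons of each leaf are mapped, in order, to the next two leaves at level $l+1$. You instead prove the index identity $n_{O_1(\omega)}=n_\omega+1$ first, by a binary carry computation on the $\hat b$-blocks, and then read off the tree position from Proposition~\ref{prop_tree_2}. In effect you reverse the paper's logical order: the paper obtains Lemma~\ref{lema_1_count} structurally and then states $n_{O_1(\omega)}=n_\omega+1$ as the subsequent Lemma~\ref{lema_2_count}, whereas you establish that identity directly and deduce the tree statement. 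Your approach makes the ``add one in binary'' picture explicit and is arguably more self-contained once Proposition~\ref{prop_tree_2} is in hand; the paper's induction, on the other hand, exhibits the two commutation relations between $O_1$ and the son maps, which are useful in their own right. One small point: the part of Proposition~\ref{prop_tree_2} you actually use is (b), which identifies the position of a leaf with $n_\omega$; part~(a) alone gives the $<_{\Omega_1}$ ordering, and you would need the preceding lemma linking $<_{\Omega_1}$ to the index to finish from there.
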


\begin{proof} From equation \eqref{def_odo} we have that $s(O_{1}(\omega))=s(\omega)$. We will proceed by induction on $l=s(\omega)$. For $l=2$ we have $O_1(11)=(2)$. Assume the claim holds for $l\geq 2$. From the inductive hypothesis, $O_1(\omega)$ is the next leaf to the right from $\omega$ at the level $l$.  From equation \eqref{def_odo} we see that $O_1(\omega^1)=\omega^{+1}$. Therefore,
\begin{eqnarray*}
O_1(\omega^{+1})=O_1((w_1+1), w_2, \dots, w_j)&=&(\underbrace{1, \dots, 1}_{w_1-\textrm{times}}, w_2+1, \dots, w_j)\\
&=&(1, \underbrace{1, \dots, 1}_{(w_1-1)-\textrm{times}}, w_2+1, \dots, w_j)\\
&=&O_1(\omega)^1, 
\end{eqnarray*}
concluding the proof.
\end{proof}


\begin{lemma} \label{lema_2_count}
Let $\omega=(w_1, w_2, \dots, w_j)$ be a finite word, with $j>1$. Then,
\begin{equation}\label{n_odo}
n_{O_1(\omega)}=n_{\omega}+1.
\end{equation}
That is, $O_1(\omega)$ is the next finite word for the complete order $<_{\Omega_1}$, provided that the leaf $\omega$ is not a rightmost leaf in the tree.
\end{lemma}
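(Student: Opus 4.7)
The plan is to prove the identity $n_{O_1(\omega)}=n_\omega+1$ by direct inspection of binary expansions, and then to deduce the order statement from the preceding binary-comparison lemma together with the fact that $O_1$ preserves the sum $s(\cdot)$.

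First I would observe that by equation \eqref{def_odo} the tail entries $w_3,\dots,w_j$ of $\omega$ are not altered by $O_1$, so the binary strings defining $n_\omega$ and $n_{O_1(\omega)}$ coincide at every bit position $\geq w_1+w_2$. Consequently the comparison reduces to the low $w_1+w_2$ bits. In $n_\omega$ these bits are built out of the two blocks $\hat b_{w_1-1}$ (occupying positions $0,\dots,w_1-1$) and $\hat b_{w_2-1}$ (occupying positions $w_1,\dots,w_1+w_2-1$), while in $n_{O_1(\omega)}$ they come from $w_1-1$ trivial blocks $\hat b_0=0$ (positions $0,\dots,w_1-2$) followed by the single longer block $\hat b_{w_2}$ (positions $w_1-1,\dots,w_1+w_2-1$). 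The shift of one bit position between $\hat b_{w_2-1}$ and the enlarged $\hat b_{w_2}$ is the geometric source of the $+1$.

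Second, I would compute these two low contributions as explicit sums of powers of two and subtract. The $\omega$-contribution is $(2^{w_2-1}-1)\,2^{w_1}+(2^{w_1-1}-1)$ and the $O_1(\omega)$-contribution is $(2^{w_2}-1)\,2^{w_1-1}$, and the subtraction collapses, using only $2^{w_2}-1=2(2^{w_2-1}-1)+1$, to exactly $1$. No case distinction is required; the computation is uniform in $j>1$.

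Finally, to obtain the order statement I would use that $s(O_1(\omega))=s(\omega)$ (visible from \eqref{def_odo}), so by the preceding lemma comparing elements of $\Omega_1$ via $2^{s(\cdot)-1}+n_{(\cdot)}$, within the level $\mathcal{T}_1^{s(\omega)}$ the order $<_{\Omega_1}$ is simply the order on the $n$-values. The proviso that $\omega$ is not rightmost, which is automatic under $j>1$ since the rightmost leaf at level $s(\omega)$ is the singleton $(s(\omega))$ by Proposition \ref{prop_tree_2}, ensures $n_\omega+1$ still corresponds to a word in the same level, and hence $O_1(\omega)$ is the immediate $<_{\Omega_1}$-successor of $\omega$. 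I expect the only genuine care point to be the bit-position alignment in the middle step; once the shift $w_1\mapsto w_1-1$ is tracked, everything else is either formal or a direct reference to earlier results.
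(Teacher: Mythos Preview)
Your proof is correct. The paper actually states this lemma without an explicit proof, presumably because it regards equation \eqref{n_odo} as an immediate consequence of the two preceding results: Lemma \ref{lema_1_count} shows that $O_1(\omega)$ sits one position to the right of $\omega$ at the same level of $\mathcal{T}_1$, and Proposition \ref{prop_tree_2}(b) identifies that position with $n_\omega$; combining these gives $n_{O_1(\omega)}=n_\omega+1$.

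Your route is genuinely different: rather than passing through the tree geometry, you compute $n_\omega$ and $n_{O_1(\omega)}$ directly as binary integers, isolate the low $w_1+w_2$ bits where they differ, and verify the difference is $1$ by the arithmetic identity $(2^{w_2}-1)2^{w_1-1}-\bigl[(2^{w_2-1}-1)2^{w_1}+(2^{w_1-1}-1)\bigr]=1$. This is self-contained and does not rely on the inductive argument of Lemma \ref{lema_1_count}; in fact it could serve as an alternative proof of that lemma via Proposition \ref{prop_tree_2}(b). The paper's implicit approach is shorter given the groundwork already laid, while yours is more explicit and makes the ``$+1$ adding machine'' interpretation visible at the level of bits. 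Your final paragraph, reducing the order statement to the binary-comparison lemma and noting that $j>1$ forces $\omega$ not to be the rightmost leaf $(s(\omega))$, is exactly right.
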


The following result, which describes the orbits of the top-down odometer on finite words,   is a consequence Lemmas \eqref{lema_1_count} and \eqref{lema_2_count}.

\begin{theorem}[\bf Counting Finite Words]  \label{thm_counting_words} For every $w_1\geq 1$ define 
\[
O_1(w_1)=(\underbrace{1, \dots, 1}_{(w_1+1)-\textrm{times}}).
\]
Then, the map $n \mapsto O^n(1)$ is a bijection between $\N \cup\{0\}$ and $\Omega_1$. That is, the orbit of $(1)$ by $O_1$ passes trough every finite word in $\Omega_1$ once and only once. Moreover, the orbit follows the $<_{\Omega_1}$ order of $\Omega_1$. 
\end{theorem}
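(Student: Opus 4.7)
The plan is to proceed by induction on the level of the odometric binary tree, showing that the $O_1$-orbit of $(1)$ enters each level $\mathcal{T}_1^l$ at its leftmost leaf, sweeps that level from left to right, and then steps directly to the leftmost leaf of $\mathcal{T}_1^{l+1}$. The base case is immediate, since $(1)$ is the unique element of $\mathcal{T}_1^1$.

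For the inductive step, suppose the orbit has just reached $\omega_0=(\underbrace{1,\dots,1}_{l\text{-times}})$, the leftmost leaf of $\mathcal{T}_1^l$. Every leaf of $\mathcal{T}_1^l$ has sum $l$, so the unique length-$1$ leaf at this level is $(l)$, which by Proposition \ref{prop_tree_2}(a) is the rightmost leaf. All other leaves at level $l$ have length strictly greater than $1$, so Lemma \ref{lema_1_count} applies at each of them and iterating $O_1$ moves one position to the right at each step. Since $|\mathcal{T}_1^l|=2^{l-1}$ by Lemma \ref{prop_tree_1}(b), after $2^{l-1}-1$ iterations the orbit arrives at $(l)$. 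At this length-$1$ leaf the top-down convention $O_1(l)=(\underbrace{1,\dots,1}_{(l+1)\text{-times}})$ carries the orbit to the leftmost leaf of $\mathcal{T}_1^{l+1}$, completing the induction.

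Combining the inductive conclusion over all levels, the orbit of $(1)$ visits every leaf of $\bigcup_{l\geq 1}\mathcal{T}_1^l$ exactly once; moreover the visits respect $<_{\Omega_1}$ because sums are strictly increasing across successive levels and the traversal within each level is left-to-right. By Lemma \ref{prop_tree_1} this union equals $\Omega_1$, so $n\mapsto O_1^n(1)$ is the claimed order-preserving bijection $\N\cup\{0\}\to\Omega_1$.

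The only subtle point is that Lemmas \ref{lema_1_count} and \ref{lema_2_count} explicitly exclude length-$1$ words from their hypotheses; the entire purpose of the top-down convention $O_1(w_1)=(\underbrace{1,\dots,1}_{(w_1+1)\text{-times}})$ is to specify the value of $O_1$ at the unique length-$1$ leaf at each level in precisely the way that passes from the rightmost leaf of $\mathcal{T}_1^l$ to the leftmost leaf of $\mathcal{T}_1^{l+1}$ without any gap or repetition. Beyond this bookkeeping and the cardinality count $|\mathcal{T}_1^l|=2^{l-1}$, no further calculation is needed.
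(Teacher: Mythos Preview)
Your proof is correct and follows essentially the same approach as the paper, which simply states that the theorem ``is a consequence of Lemmas~\ref{lema_1_count} and~\ref{lema_2_count}''; you have filled in exactly the details that this remark leaves implicit, namely the level-by-level sweep via Lemma~\ref{lema_1_count}, the identification of the unique length-$1$ leaf at each level as the rightmost one, and the role of the top-down convention in bridging consecutive levels.
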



The next result is a  manifestation of the fact that odometers are {\it $+1$ adding machines}.

 \begin{corollary} Let $O_1$ be defined as in Theorem \ref{thm_counting_words}, then
 \[
 2^{s(O_1(\omega))-1}+n_{O_1(\omega)}=2^{s(\omega)-1}+n_{\omega}+1.
 \]
 \end{corollary}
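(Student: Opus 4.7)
The plan is to split on the length $j$ of the finite word $\omega=(w_1,w_2,\dots,w_j)$, since the definition of $O_1$ bifurcates there. The case $j>1$ is an essentially direct appeal to Lemma \ref{lema_2_count}, while the case $j=1$, where $\omega$ is a rightmost leaf of the tree, requires a direct calculation; this boundary case is where the specific choice of the top-down odometer (adding one extra $1$) plays its role.

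First I would treat the case $j>1$. By \eqref{def_odo} the action of $O_1$ preserves the sum, i.e. $s(O_1(\omega))=s(\omega)$. Lemma \ref{lema_2_count} gives $n_{O_1(\omega)}=n_\omega+1$. Adding $2^{s(\omega)-1}$ to both sides yields the claimed identity in this case, with no further work.

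Next I would handle the case $j=1$, i.e.\ $\omega=(w_1)$. Here the definition in Theorem \ref{thm_counting_words} prescribes $O_1(\omega)=(\underbrace{1,\dots,1}_{(w_1+1)\textrm{-times}})$, so $s(O_1(\omega))=w_1+1=s(\omega)+1$. Using the two identities flagged right after the definition of $n_\omega$, namely $n_{(s)}=2^{s-1}-1$ and $n_{(1,\dots,1)}=0$, I would compute
\begin{equation*}
2^{s(O_1(\omega))-1}+n_{O_1(\omega)}=2^{w_1}+0=2^{w_1-1}+(2^{w_1-1}-1)+1=2^{s(\omega)-1}+n_\omega+1,
\end{equation*}
which is exactly the desired equality.

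The only subtle point, and the only place where anything could go wrong, is the transition from a rightmost leaf $(l)$ to the leftmost leaf $(\underbrace{1,\dots,1}_{(l+1)\textrm{-times}})$ of the next level: Lemma \ref{lema_2_count} explicitly excludes rightmost leaves, so one cannot quote it. The identity $2^{w_1-1}+(2^{w_1-1}-1)+1=2^{w_1}$ encodes the ``carry'' that happens when moving down a level, and it is precisely this compatibility that justifies calling $O_1$ a $+1$ adding machine in the combined coordinate $2^{s(\omega)-1}+n_\omega$. Once both cases are verified, the corollary is established.
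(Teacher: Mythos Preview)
Your proof is correct and follows exactly the approach the paper intends: the paper states this corollary without proof, treating it as an immediate consequence of Lemma~\ref{lema_2_count} (for $j>1$) together with the explicit top-down rule at length-one words, and your two-case argument is precisely the natural elaboration of that. The only thing worth noting is that the proviso in Lemma~\ref{lema_2_count} about rightmost leaves is vacuous for $j>1$, since rightmost leaves are exactly the words $(l)$ of length one; you handled this correctly by treating $j=1$ separately.
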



 \subsection{Odometric tree and odometric action for $\Omega_k$} The results for $\mathcal{T}_1$ and $O_1$  can be extended so as to consider finite words having letters larger than an arbitrary number. Let $k\geq 0$, the set of finite words with symbols starting with $k$ will be denoted by  $\Omega_k=\bigcup_{j\geq 1}\N_k^j$. 
 
 \begin{definition}
Denote by $(\cdot)_k:\Omega_k\to\Omega_1$ the bijection between $\Omega_k$ and $\Omega_1$ defined  
for $\omega=(w_1, \dots, w_j)\in \Omega_k$  by
  \[
 (\omega)_k=(w_1, \dots, w_j)_k=(w_1-(k-1), \dots, w_j-(k-1)).
 \]
 The  {\it $k$-odometric binary tree}, denoted by $\mathcal{T}_k$, is obtained from $\mathcal{T}_1$ by applying $(\cdot)_k^{-1}$ to every leaf of $\mathcal{T}_1$.   The son-rule for $\mathcal{T}_k$ can be expressed by: a leaf $\omega=(w_1, w_2, \dots, w_j) \in \Omega_k$ has two sons
 \[
 \omega^1=(k, w_1, w_2, \dots, w_j)\quad \textrm{and}\quad \omega^{+1}=(w_1+1, w_2, \dots, w_j).
 \]
 \end{definition}

%
%
%
 
 \begin{definition}
The $k$-sum $s_k$ of a finite word $\omega=(w_1, \dots, w_j)\in\Omega_k$ is defined by:
 \[
 s_k(\omega)=s((\omega)_k)=\sum_{1\geq r\geq j} w_r-j(k-1).
 \]
The $k$-reverse lexicographic order on $\Omega_k$, denoted by $<_{\Omega_k}$, is defined by:
 \[
 \omega <_{\Omega_k}\bar \omega \quad\textrm{if and only if} \quad (\omega)_k<_{\Omega_1}(\bar\omega)_k.
 \]
 Let $\Omega_k^{l}=\{\omega\in \Omega_k\ | \ s_k(\omega)=l\}$.
    \end{definition}

 \begin{proposition}\label{prop_k_tree_2}
 The $k$-odometric binary tree verifies:
\begin{enumerate}
\item The leafs in the level $\mathcal{T}_k^l$ appears from left-right in the order induced by $<_{\Omega_k}$ on $\Omega_k^l$. In particular, the leftmost leaf is $(\underbrace{k,\dots, k}_{l-\textrm{times}})$ and the rightmost leaf is $(l)$. 
\item The finite word $\omega=(w_1, w_2, \dots, w_j)$ is located at the leaf in the $n_{(\omega)_k}$ position of the level $\mathcal{T}_k^{s_k(\omega)}$. 
\end{enumerate}
\end{proposition}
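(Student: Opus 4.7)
The plan is to reduce the statement to Proposition \ref{prop_tree_2} by showing that the bijection $(\cdot)_k:\Omega_k\to\Omega_1$ is a rooted-tree isomorphism between $\mathcal{T}_k$ and $\mathcal{T}_1$ that preserves levels and the left-right ordering on each level. Since both claims (a) and (b) are statements about levels, positions, and orderings, once the isomorphism is in place, the whole proposition will follow by transferring the corresponding facts for $\mathcal{T}_1$ through $(\cdot)_k^{-1}$.

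First, I would verify that $(\cdot)_k$ respects the son rule. Given $\omega=(w_1,\dots,w_j)\in\Omega_k$, a direct computation yields
\[
(\omega^1)_k = (1,\, w_1-(k-1),\dots, w_j-(k-1)) = ((\omega)_k)^1,
\]
\[
(\omega^{+1})_k = (w_1-(k-1)+1,\, w_2-(k-1),\dots, w_j-(k-1)) = ((\omega)_k)^{+1}.
\]
Since, in addition, the root $(k)$ of $\mathcal{T}_k$ is sent to the root $(1)$ of $\mathcal{T}_1$, this confirms that $(\cdot)_k$ is a rooted binary-tree isomorphism that respects the order of sons. In particular $\mathcal{T}_k^l = (\cdot)_k^{-1}(\mathcal{T}_1^l)$, and this is consistent with the definitions $s_k(\omega)=s((\omega)_k)$ and $\Omega_k^l=(\cdot)_k^{-1}(\Omega_1^l)$.

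Next, the order $<_{\Omega_k}$ is defined precisely so that $(\cdot)_k$ is an order isomorphism onto $(\Omega_1,<_{\Omega_1})$. Applying Proposition \ref{prop_tree_2}(a) to $\mathcal{T}_1^l$ and pulling back via $(\cdot)_k^{-1}$, the leafs in $\mathcal{T}_k^l$ appear left-to-right in the order induced by $<_{\Omega_k}$ on $\Omega_k^l$; the extremal leafs are the $(\cdot)_k^{-1}$-images of $(\underbrace{1,\dots,1}_{l})$ and $(l)$, giving $(\underbrace{k,\dots,k}_{l})$ on the left and the one-letter word whose $k$-sum is $l$ on the right. For part (b), applying Proposition \ref{prop_tree_2}(b) to $(\omega)_k$ shows that $(\omega)_k$ occupies position $n_{(\omega)_k}$ in $\mathcal{T}_1^{s((\omega)_k)}$; transferring through $(\cdot)_k^{-1}$ places $\omega$ at position $n_{(\omega)_k}$ of $\mathcal{T}_k^{s_k(\omega)}$, which is exactly the claim.

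No substantial obstacle arises: the only thing that needs checking is the son-rule correspondence displayed above, and this is a routine unwinding of the definition of $(\cdot)_k$. Everything else is the formal transport of structure across an isomorphism, so the level-by-level ordering and the position formula are inherited from the $k=1$ case without additional combinatorial work.
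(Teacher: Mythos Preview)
Your proposal is correct and matches the paper's own treatment: the paper states Proposition~\ref{prop_k_tree_2} without proof, since by construction $\mathcal{T}_k$ is defined as the image of $\mathcal{T}_1$ under $(\cdot)_k^{-1}$, and both $s_k$ and $<_{\Omega_k}$ are defined precisely so that $(\cdot)_k$ is level- and order-preserving; your explicit verification of the son-rule compatibility and the transport of Proposition~\ref{prop_tree_2} through the bijection is exactly the intended (implicit) argument. One small remark: your phrasing ``the one-letter word whose $k$-sum is $l$'' is in fact $(l+k-1)$, which quietly corrects what appears to be a typo in the statement's claim that the rightmost leaf is $(l)$.
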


\begin{definition}
We define the $k$-odometric action  $O_k:\Omega_k\to \Omega_k$ by $O_k(\omega)=(O_1((\omega)_k))_k^{-1}$. In other words, for $(w_1,w_2,  \dots, w_j) \in \Omega_k$, with $j>1$ we have
\[
O_k(w_1, \dots, w_j)=(\underbrace{k, \dots, k, }_{(w_1-k)-\textrm{times}} (w_2+1), \dots, w_j).
\]
As in the case for $k=1$, the action $O_k$ at words of the form $(l)$ can be defined in an arbitrary fashion.
\end{definition}

\begin{lemma}
If $\omega=(w_1, \dots, w_j)\in \Omega_k$, with $j>1$, then  $O_k(\omega)$ is the next finite word for the complete order $<_{\Omega_k}$. Hence, provided that $\omega$ is not a rightmost leaf in $\mathcal{T}_k$,  then $O_k(\omega)$ is the next leaf to the right of $\omega$, at the level $\mathcal{T}_k^{s_k(\omega)}$. 
\end{lemma}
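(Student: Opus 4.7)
The plan is to reduce everything to the already-proven $k=1$ case via the bijection $(\cdot)_k:\Omega_k\to\Omega_1$. First I would record two structural features of $(\cdot)_k$ that are essentially tautological from its definition. On the one hand, $(\cdot)_k$ is an order isomorphism from $(\Omega_k,<_{\Omega_k})$ to $(\Omega_1,<_{\Omega_1})$, since $<_{\Omega_k}$ was defined by pulling back $<_{\Omega_1}$ along $(\cdot)_k$. On the other hand, $(\cdot)_k$ conjugates $O_k$ to $O_1$, namely
\[
(O_k(\omega))_k = O_1((\omega)_k),
\]
which is just the definition $O_k(\omega)=(O_1((\omega)_k))_k^{-1}$ read in the other direction. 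Note also that $(\cdot)_k$ preserves the length of a word, so the hypothesis $j>1$ transports correctly.

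With these two observations in hand, the first claim is nearly automatic. Since $j>1$, the image $(\omega)_k\in\Omega_1$ has length strictly greater than $1$, so Lemma \ref{lema_2_count} at level $1$ tells us that $O_1((\omega)_k)$ is the $<_{\Omega_1}$-successor of $(\omega)_k$. Applying the order isomorphism $(\cdot)_k^{-1}$ and the conjugacy identity, we conclude that $O_k(\omega)$ is the $<_{\Omega_k}$-successor of $\omega$ in $\Omega_k$.

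For the tree statement, I would first note that $O_k$ preserves the $k$-sum $s_k$ on words of length greater than $1$. This is immediate from $s_k(\omega)=s((\omega)_k)$ together with the obvious identity $s(O_1(\eta))=s(\eta)$ for any $\eta\in\Omega_1$ of length greater than $1$, which is visible directly from \eqref{def_odo}. Hence $O_k(\omega)\in\Omega_k^{s_k(\omega)}$, so $O_k(\omega)$ sits at the same level $\mathcal{T}_k^{s_k(\omega)}$ as $\omega$. Combining this with the first assertion and Proposition \ref{prop_k_tree_2}(a), which states that leaves within each level of $\mathcal{T}_k$ are ordered from left to right by $<_{\Omega_k}$, gives that $O_k(\omega)$ is the immediate right neighbor of $\omega$ in $\mathcal{T}_k^{s_k(\omega)}$, provided $\omega$ is not the rightmost leaf of that level.

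There is no real obstacle here: the bulk of the combinatorics was already done for the case $k=1$ in Lemmas \ref{lema_1_count} and \ref{lema_2_count} and in Proposition \ref{prop_k_tree_2}, and the map $(\cdot)_k$ is explicitly engineered to transport these results to $\Omega_k$. The only point to be a bit careful about is checking that the conjugacy identity and length-preservation actually hold for the explicit formula defining $O_k$ on words of length $>1$; this is a one-line verification comparing the two expressions for $O_k(\omega)$.
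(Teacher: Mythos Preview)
Your proposal is correct and matches the paper's intended argument: the paper states this lemma without proof, treating it as an immediate consequence of the $k=1$ case (Lemmas \ref{lema_1_count}, \ref{lema_2_count}) transported through the bijection $(\cdot)_k$, which is precisely the reduction you carry out. The only minor remark is that the caveat ``provided $\omega$ is not a rightmost leaf'' is automatically satisfied once $j>1$, since rightmost leaves are the length-one words $(l)$; you use this implicitly and it is worth making explicit.
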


\subsection{Examples}\label{examples} There exists different numeration systems with countably many symbols. Finite words can be interpreted using them. For example,  the continued fraction and the backward continued fraction systems. In these cases, finite words correspond to rational numbers. By means of these identifications we will construct, or recover, both the Kepler and the Calkin and Wilf trees. Moreover, the odometric action on these trees allow us to recover different ways of counting rational numbers in a unified way.  We begin, though, with the more standard binary expansion and its relation to the usual dyadic odometer.

\subsubsection{Binary expansion and Dyadic Tree.} \label{dyd} Dyadic numbers in $(0,1)\subset \mathbb{R}$ are rationals that can be written as a finite binary expansion $x=\sum_{1\leq n \leq j}\frac{a_n}{2^n}$, with $a_n\in\{0,1\}$ and  $a_j=1$. Therefore, to every dyadic number $x \in (0,1)$ we can associate a finite word $(a_1, \dots, a_j)$ with $a_i \in \{0,1\} $. As in the proof of Theorem \ref{main_odo}, we can recode this finite word in $\Omega_0$  using the blocks $b_m=\underbrace{1, \dots, 1}_{m-\textrm{times}},0$. The final block in the form $\underbrace{1, \dots, 1}_{m-\textrm{times}}$ will be coded by $b_m$. For example, $\frac{19}{32}=0,10011\mapsto (1, 0, 2)\in \Omega_0$. 
The $0$-odometric binary tree $\mathcal{T}_0$ has finite words ending by $0$. This produces repetitions in the coding of dyadic numbers. Nevertheless, the sub-tree $\mathcal{T}_{0,(1)}$, composed by the descendants of $(1)$ never ends by $0$, and contains a unique coded version of every dyadic number in $(0,1)\subset \R$. The leftmost leaf of  the level $l$ is $(\underbrace{0, \dots, 0 }_{(l-1)-\textrm{times}}, 1)$ and the rightmost leaf is $(l)$.  The following are the son-rules for $\mathcal{T}_{0,(1)}$ in terms of dyadic numbers:
the number $x=0,b_{w_1}b_{w_2}\dots b_{w_j}$ has two sons, in the following order
\[
0,b_0b_{w_1}b_{w_2}\dots b_{w_j}=\frac{x}{2} \quad \textrm{and} \quad 0,b_{w_1+1}b_{w_2}\dots b_{w_j}=\frac{1+x}{2}.
\]

\begin{figure}
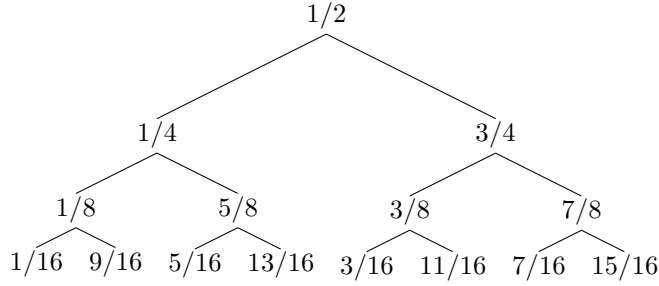

 \Tree [.1/2 [.1/4 [.1/8 [.1/16 ] [.9/16 ] ] [.5/8 [.5/16 ] [.13/16 ] ] ] [.3/4 [.3/8 [.3/16 ] [.11/16 ] ] [.7/8 [.7/16 ] [.15/16  ] ] ]]
\centering
 \caption{The first four levels of the dyadic tree or the  $\mathcal{T}_{0,(1)}$ tree.}
  \label{dyadic_tree}
\end{figure}

See Figure \ref{dyadic_tree} for the first four levels of this tree. The odometric action $O_0$ defined so that $O_0(w_1)=(\underbrace{0, \dots, 0 }_{w_1-\textrm{times}}, 1)$ (that is, the top-down definition restricted to $\mathcal{T}_{0, (1)}$), coincides with the action of the  the von Newmann Kakutani odometer restricted to dyadic numbers (see Section \ref{odometro_en_I}). 

\begin{remark} \label{count_dyadic}
It is a direct consequence of Theorem \ref{thm_counting_words}  that the map $n \mapsto O_0^n(1)$ is a bijection between the sets of finite words with symbols in $\{0,1\}$  and   $\N \cup\{0\}$.
\end{remark}

\subsubsection{Continued fractions and the Kepler Tree.} \label{con_f} Every real number $ x \in (0,1)$  can be written as a continued fraction of the form
\begin{equation*}
x = \textrm{ } \cfrac{1}{a_1 + \cfrac{1}{a_2 + \cfrac{1}{a_3 + \dots}}} = \textrm{ } [a_1\ a_2\ a_3\ \dots],
\end{equation*}
where $a_i \in \mathbb{N}$. As it is well known \cite[Theorem 170]{hw}, arithmetic properties of the number are captured by the expansion. In particular, a number $x \in (0,1)$ is irrational if and only if its continued fraction expansion  has infinitely many terms. Moreover, irrational numbers have a unique continued fraction expansion. On the other hand,  rational numbers have two different expansions, both of them finite and one of them having last digit equal to $1$. In fact $[a_1\ a_2\ a_3\ \dots\ a_n \ 1]=[a_1\ a_2\ a_3\ \dots\ (a_n+1)]$. Hence, every rational in $(0,1)$ has two representatives as leafs of the tree $\mathcal{T}_1$. If we restrict ourselves to the sub-tree of the descendants of $(2)$, denoted by $\mathcal{T}_{1, (2)}$, we obtain that every rational number in $(0,1)$ appears exactly once, see Figure \ref{kepler_tree}.  The son-rule can be computed in terms of continued fractions. Indeed, if $\frac{p}{q}=[a_1\ a_2\ \dots \ a_n]$ then
\[
[1\ a_1 \ \dots \ a_n]=\cfrac{1}{1+\cfrac{1}{a_1 + \cfrac{1}{a_2 + \dots}}}=\cfrac{1}{1+\cfrac{p}{q}}=\frac{q}{p+q}, 
\]
 and
 \[
[ (a_1+1) \ \dots \ a_n]=\cfrac{1}{(a_1 +1) + \cfrac{1}{a_2 + \dots}}=\cfrac{1}{1+\cfrac{q}{p}}=\frac{p}{p+q}.
\]

\begin{figure}
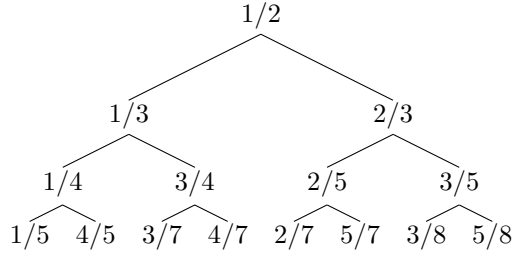
 
 \Tree [.1/2 [.1/3 [.1/4 [.1/5 ] [.4/5 ] ] [.3/4 [.3/7 ] [.4/7 ] ] ] [.2/3 [.2/5 [.2/7 ] [.5/7 ] ] [.3/5 [.3/8 ] [.5/8  ] ] ]]
\centering
 \caption{The first four levels of the  Kepler tree or the $\mathcal{T}_{1, (2)}$ tree.}
  \label{kepler_tree}
\end{figure}

Hence, the tree ${\mathcal{T}_{1, (2)}}$ coincides with the well known {\it Kepler tree} (see \cite{kep, oeis} or \cite[p.383]{b}). In particular, every rational number in $(0,1)$ appears once in this tree, and the exact position in the tree can be computed from its continued fraction. 

\begin{remark}
The top-down odometric action on the Kepler tree induces a bijection between the rational numbers in $(0,1)$ and $\N \cup \{0\}$. Indeed, it suffices to consider the map $n \mapsto O_1^n((2))$ and to identify each finite word $O_1^n((2))$ with the corresponding rational.
\end{remark}

In section \ref{cfe} we give a complete account of the dynamics of the odometric action $O_1$ on the interval and, in particular, on the Kepler tree.

%
%
%
%

\subsubsection{Backward continued fractions and the Calkin and Wilf tree.} \label{con_b} Every real number $x \in [0,1)$ can be written as backward continued fraction of the form
\begin{equation*}
x = 1- \textrm{ } \cfrac{1}{a_1 - \cfrac{1}{a_2 - \cfrac{1}{a_3 - \dots}}} = \textrm{ } [a_1\ a_2\ a_3\ \dots]_B,
\end{equation*}
where  $a_i \geq 2$. Irrational numbers  have a unique  backward continued fraction expansion which is infinite. Rational numbers  have two different expansions: a finite one  and an infinite one of  that ends with a tail of $2$'s (see \cite[Theorem 1.2]{ka}). In fact, 
\begin{equation}\label{finite_2_B}
[a_1\ a_2\ a_3\ \dots \ a_n]_B=[a_1\ a_2\ a_3\ \dots \ (a_n+1)\ 2\ 2\ 2\ \dots]_B.
\end{equation}
Hence, every rational in $(0,1)$ has a unique representative as a leaf of the tree $\mathcal{T}_2$. The son rule can be computed in terms of the backward continued fraction. If $\frac{p}{q}=[a_1\ a_2\ a_3\ \dots \ a_n]_B$ then
\[
[2\ a_1 \ \dots \ a_n]_B=1-\cfrac{1}{2-\cfrac{1}{a_1 - \cfrac{1}{a_2 - \dots}}}=1-\cfrac{1}{1+\cfrac{p}{q}}=\frac{q}{p+q}, 
\]
and
\[
[ (a_1+1) \ \dots \ a_n]_B=1-\cfrac{1}{(a_1+1) - \cfrac{1}{a_2 - \dots}}=1-\cfrac{1}{1+\cfrac{q}{q-p}}=\frac{q}{2q-p}.
\]
The Calkin and Wilf tree, see \cite[p.361]{cw}, has the following son-rule: every leaf $\frac{a}{b}$ has two sons, $\frac{a}{a+b}$ and $\frac{a+b}{b}$. Thus, leafs alternate  rationals in $(0,1)$ and rationals larger than $1$. The odd terms (that is, the leafs that are in $(0,1)\subset \mathbb{R}$) form a sub-tree with the following son-rule: the leaf $\frac{a}{a+b}$ has two sons that appear in the following order: $\frac{1}{2a+b}$ and $\frac{a+b}{a+2b}$, see Figure \ref{calkin_tree}. 


%
%
%
%
%

\begin{figure}
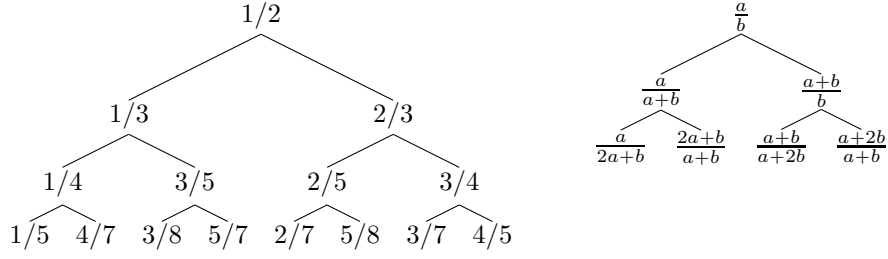

  \Tree [.1/2 [.1/3 [.1/4 [.1/5 ] [.4/7 ] ] [.3/5 [.3/8 ] [.5/7 ] ] ] [.2/3 [.2/5 [.2/7 ] [.5/8 ] ] [.3/4 [.3/7 ] [.4/5  ] ] ]]
  \hskip 0.3in
  \Tree [.$\frac{a}{b}$ [.$\frac{a}{a+b}$ [.$\frac{a}{2a+b}$ ] [.$\frac{2a+b}{a+b}$ ] ] [.$\frac{a+b}{b}$ [.$\frac{a+b}{a+2b}$ ] [.$\frac{a+2b}{a+b}$ ] ]]
\centering
 \caption{The first four levels of the Calkin and Wilf sub-tree or the $\mathcal{T}_{2, (2)}$ tree and the son-rule.}
  \label{calkin_tree}
\end{figure}

Replacing $a=p$ and $q=a+b$, we see that the tree $\mathcal{T}_{2, (2)}$ coincides with the sub tree of the Calkin and Wilf tree of rationals in $(0,1)$ (which, in turn, coincides with the odd leafs at every level), see Figure \ref{calkin_tree}. In particular, every rational in $(0,1)\subset \mathbb{R}$ appears once and only once in this tree. Moreover, its exact position 
can be computed from the backward continued fraction. Since every rational has an expansion with infinitely many digits, the odometric action $O_2$ has a natural definition on this tree. It turns out that the odometric action runs through every leaf from left-right to top-down. More details in section \ref{sbcf}.

\begin{remark} \label{count_rat}
It is a direct consequence of Theorem \ref{thm_counting_words} and that $O_2$ is uniquely defined for rational numbers (by means of their infinite expansion) that the map $n \mapsto O_2^n(1)$ is a bijection between the sets of rational numbers in $(0,1)$ and   $\N \cup\{0\}$. Indeed, we only need to identify $O_2^n(1)$ with the corresponding rational.
\end{remark}

\section{Odometers in non-compact  Polish spaces}

Using universality properties of the Baire space it is possible to extend the results in section \ref{sec:def} to a broad range of spaces. Recall that a separable, completely metrizable space is called \emph{Polish space}. A topological space is \emph{zero dimensional} if it is Hausdorff and has a basis consisting of clopen sets. The odometer defied in the non-compact space $\Sigma_{\geq 1}$ can be used to define odometers in several spaces. 


\begin{theorem} \label{au}
Let $X$ be nonempty Polish zero-dimensional space, for which all compact subsets have empty interior.
 Then, there exists a continuous map $O_{X}: X \to X$ topologically conjugated to $O_1:  \Sigma_{\geq 1}  \to \Sigma_{\geq 1}$.
\end{theorem}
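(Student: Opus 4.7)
The plan is to invoke the Alexandrov--Urysohn characterization of the Baire space that the authors cite in the introduction: up to homeomorphism there is a unique nonempty zero-dimensional Polish space in which every compact subset has empty interior. The first step is therefore to verify that $\Sigma_{\geq 1}$ itself satisfies these hypotheses. This is routine: $\Sigma_{\geq 1}=\N_1^{\N}$ is Polish as a countable product of discrete countable spaces, zero-dimensional as a product of zero-dimensional spaces, and any basic cylinder $[w_1,\dots,w_n]$ contains an embedded copy of $\N_1$, so no nonempty open set can be covered by a compact set. Thus by Alexandrov--Urysohn there exists a homeomorphism $h\colon X\to\Sigma_{\geq 1}$.

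Given $h$, I would transport the dynamics in the only possible way, setting
\[
O_X \;:=\; h^{-1}\circ O_1\circ h.
\]
By construction this is a topological conjugacy between $(X,O_X)$ and $(\Sigma_{\geq 1},O_1)$, so the one remaining point is continuity. This follows directly from the explicit formula for $O_1$: if two sequences in $\Sigma_{\geq 1}$ agree on a sufficiently long initial segment, then in particular their first coordinate $w_1$ coincides, so the blocks of $1$'s produced by $O_1$ have the same length and the remaining coordinates still agree on a long initial segment. Equivalently, one can appeal to the Remark following Theorem \ref{main_odo} identifying $O_1$ with $O_0$ up to conjugacy, and to Theorem \ref{main_odo} itself, which realizes $O_0$ as a conjugate of the continuous dyadic odometer on $\Sigma_{2,0}$ via the homeomorphism $\eta$.

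The whole proof is essentially a one-line deduction combining the Alexandrov--Urysohn theorem with the construction of Section \ref{sec:def}. The only step requiring any genuine content is the classical characterization of the Baire space itself, which I would treat as a black box exactly as the paper does; everything else---the verification that $\Sigma_{\geq 1}$ falls under the hypotheses, and the continuity of $O_1$---is entirely routine, so I do not expect any real obstacle.
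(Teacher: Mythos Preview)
Your proposal is correct and follows essentially the same route as the paper: invoke the Alexandrov--Urysohn characterization of the Baire space to obtain a homeomorphism between $X$ and $\Sigma_{\geq 1}$, then transport $O_1$ (the paper writes $O_0$, which is conjugate to $O_1$) along it. Your additional remarks verifying that $\Sigma_{\geq 1}$ itself lies in the Alexandrov--Urysohn class and that $O_1$ is continuous are welcome elaborations the paper leaves implicit, but the argument is the same.
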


\begin{proof}
Alexandrov and Urysohn \cite[Theorem 7.7 p.37]{ke} proved that the Baire space  is the unique, up to homeomorphism, nonempty Polish zero-dimensional space, for which all compact subsets have empty interior.
Therefore, there exists an homemorphism $g: \Sigma_{\geq 1} \to  X$.
It suffices to define $O_{X}: X \to X$ by
\begin{equation*}
O_{X}=   g  \circ O_0 \circ g^{-1}.
\end{equation*}
This readily yields the desired transformation. 
\end{proof}

This result can be applied in the setting of countable Markov shifts. Let $M$ be a $\N\times \N$ matrix  with entries $0$ or $1$. The \emph{symbolic space} associated to $M$ with alphabet $\N$ is defined by
 \begin{equation*}
 \Sigma_M:=\left\{ (x_1, x_2, \dots) \in \Sigma_{\geq 1}: M(x_i, x_{i+1})=1 \text{ for every } i \in \N \right\},
\end{equation*} 
As before, we endow $\N$ with the discrete topology and $\N^{\N}$ with the product topology. On $\Sigma_M$ we consider the induced topology given by the natural inclusion $\Sigma\subset \N^{\N}$. We stress that, in general, this is a non-compact space.  The space $\Sigma_M$ is locally compact if and only if for every $i \in \N$  we have $\sum_{j \in \N} M(i,j ) <\infty$ (see \cite[Observation 7.2.3]{ki}). The space $\Sigma_M$ is metrizable. The following result is direct consequence of Theorem \ref{au}.

\begin{corollary} \label{symb}
Let $\Sigma$ be a non-locally compact  symbolic space, then there exists a continuous map $O_{\Sigma}: \Sigma \to \Sigma$ topologically conjugated to $O_1:  \Sigma_{\geq 1}  \to \Sigma_{\geq 1}$.
\end{corollary}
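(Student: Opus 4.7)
The plan is to deduce this corollary directly from Theorem \ref{au} by verifying its hypotheses for $\Sigma$. Theorem \ref{au} requires the space to be nonempty, Polish, zero-dimensional, and to have all compact subsets with empty interior. Once these four properties are established, the conjugacy $O_\Sigma = g \circ O_0 \circ g^{-1}$ (or rather its transcription via the conjugacy between $O_0$ and $O_1$ implicit in the paper) is immediate.

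The first three hypotheses are largely inherited from the ambient Baire space. Indeed, since $\N^{\N}$ with the product topology is Polish and zero-dimensional, and $\Sigma = \Sigma_M$ is defined by the countably many closed conditions $M(x_i, x_{i+1}) = 1$, it is a closed subset of $\N^{\N}$; hence $\Sigma$ is itself Polish and zero-dimensional. Nonemptiness is built into the setup of a symbolic space with alphabet $\N$. This is the routine portion and I would dispatch it in one sentence.

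The key step, and the only place where the hypothesis of non-local compactness is actually used, is showing that every compact subset $K \subset \Sigma$ has empty interior. I would argue by contradiction: if $K$ contained some nonempty open set $U$, then by the basis of cylinders $[a_1, \ldots, a_n] = \{x \in \Sigma : x_i = a_i \text{ for } i \leq n\}$, there would be a cylinder $C \subset U \subset K$. As $C$ is closed in $\Sigma$ and contained in the compact set $K$, the cylinder $C$ would itself be compact. By the characterization of \cite[Observation 7.2.3]{ki}, compactness of the cylinder $C$ forces every state reachable from $a_n$ along an admissible path to have finite outgoing degree, so the shift restricted to paths starting from $a_n$ is locally compact. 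Combined with the fact that $\Sigma$ is non-locally compact (i.e.\ some state $j_*$ satisfies $\sum_k M(j_*,k) = \infty$), this yields the contradiction, at least under the mild connectivity assumption that $j_*$ is reachable from every state, which is natural for the symbolic spaces usually considered.

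The main obstacle I anticipate is precisely this last point: transferring the \emph{global} non-local compactness hypothesis into the \emph{local} statement that no cylinder is compact. In full generality one may need to assume some form of irreducibility on $M$ (or at least that every state can reach some state of infinite outgoing degree). With that assumption in place the proof becomes a one-line application of Theorem \ref{au}; without it, one has to be a bit careful about which cylinders fail to be compact, but the same homeomorphism strategy still produces an odometer on the subset of $\Sigma$ formed by sequences that eventually visit a non-locally-compact state, which in practice is the whole space of interest.
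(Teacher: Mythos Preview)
Your approach is exactly the paper's: the corollary is stated as a ``direct consequence'' of Theorem~\ref{au}, with no verification of the hypotheses given. So your plan to check that $\Sigma_M$ is nonempty, Polish, zero-dimensional, and that all compact subsets have empty interior is precisely what is needed, and your argument for the first three properties is standard and correct.

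The connectivity issue you flag is genuine and is not addressed in the paper. Your analysis is right: a cylinder $[a_1,\ldots,a_n]$ is compact if and only if every state reachable from $a_n$ has finite out-degree, so if the transition graph has a state $j_*$ of infinite out-degree that is \emph{not} reachable from some other state $a$, then $\Sigma_M$ is non-locally compact yet the cylinder $[a]$ is compact with nonempty interior, and Theorem~\ref{au} does not apply. The paper is implicitly assuming something like irreducibility of $M$ (which is the usual standing hypothesis in this area and is certainly the case for the full shift $\Sigma_{\geq 1}$ that motivates the whole discussion). Your write-up should simply make this hypothesis explicit; with it in place, your contradiction argument goes through cleanly and the proof is complete.
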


%
%

Moore generally,  for any Polish space with no isolated points we can induce, by semi-conjugacy, an odometer. Indeed, it suffices  to proceed as in Theorem \ref{au} noticing that Sierpinski  \cite[7.15 p.40]{ke}    proved that if $X$ is a non empty Polish space with no isolated points then there exists a continuous bijection $g: \Sigma_{\geq 1} \to X$.  We stress that, in this case, the map $g^{-1}$ need not to be continuous.
%
%
%

\section{Odometers in the interval} \label{sec:i}

Numeration systems have been related to odometers, at least, since the work of von Neumann and that of Kakutani. In this section, by means of the results in section \ref{sec:def}, we extend this relation to a wide range of numeration  systems using infinite alphabets.   Before moving to the countable alphabet setting we recall the interval map realization of the odometer $O$.  Let  $(I_n)_n$ be the partition of the unit interval, where $I_n=[\frac{2^{n-1}-1}{2^{n-1}}, \frac{2^{n}-1}{2^{n}})$, for $n \in \N$. The 
interval realization of the \emph{dyadic odometer} is the interval map $\tilde{O}: [0,1) \to [0,1)$ defined by
\begin{equation}\label{odometro_en_I}
\tilde{O}(x) = x +\frac{3}{2^{n}} -1, 
\end{equation}
for $x \in I_n$. We have that $\tilde{O}: [0, 1] \setminus \{{\text{dyadic rationals}} \} \to  [0, 1] \setminus  \{{\text{dyadic rationals}}\}$ is topologically conjugated to the odometer $O$ defined in the space $\Sigma_2 $ minus the sequences that end with a tail of ones or with a tail of zeros. Moreover, the action of $\tilde{O}$ can be extended to the dyadic rationals. Thus, we obtain an interval map realization of Remark \ref{count_dyadic}.

\begin{remark}
The map $n \mapsto \tilde{O}(1/2)$ is a bijection between the sets of dyadic numbers in $(0,1)$ and $\N \cup \{0\}$. That is, the orbit of $1/2$ by $\tilde{O}$ passes through every dyadic number in $(0,1)$ once and only once.  
\end{remark}

\subsection{Odometers obtained from interval maps}

Given an interval map with countably many  full branches, by means of the results in section \ref{sec:def}, we construct an associated odometer. Denote by $\text{dom}(T)$ the domain of the function $T$. We now define the class of interval maps we consider.   

\begin{definition}
Let $(I_n)_n$ be a collection of intervals in $[0,1]$ having disjoint interiors. A map $T: \bigcup_{n \geq 1}I_n \mapsto [0,1]$ is a Countable-Markov-Interval map (CMI map) if the following holds:
\begin{enumerate}
\item The map $T$ is full-branched, that is, for every $n \in \N$ we have $(0,1) \subset T(I_n)$.
\item There exists $\alpha >0$ such that the map is pieecewise of class $C^{1+ \alpha}$.
\item For every $x \in (0,1)$ we have that $|T'(x)|>1$.
\item If  $x \in \text{dom}(T) \cap \{0,1\}$ then  $|T'(x)| \geq 1$.\end{enumerate}
\end{definition}

For a CMI-map $T$ its corresponding \emph{repeller} is the set
\begin{equation*}
\Lambda_T= \left\{x \in \text{dom}(T): T^n(x) \text{ is well defined for every } n \in \N \right\}.
\end{equation*}
The set of points with \emph{finite orbit} for $T$ is defined by 
$\mathcal{F}= \text{dom}(T) \setminus \Lambda_T$. 
We stress that for some maps  this set may be empty. For an interval $I$ let $\partial I$ be the boundary points of $I$. Denote the set of \emph{boundary} points of $T$ by:
\begin{equation*}
\mathcal{B}_T=  \bigcup_{n \geq 0} T^{-n}\left(\{0,1\} \cup \bigcup_n \partial I_n	\right).
\end{equation*}
The following holds, 

\begin{lemma}
For every CMI-map $T$ there exist a continuous  \emph{coding map} $\pi_T : \Sigma_{\geq 1} \to \Lambda_T$ such that
\begin{equation*}
\pi_T \circ T =  \sigma \circ \pi_T.
\end{equation*}
\end{lemma}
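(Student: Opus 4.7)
The plan is to realize $\pi_T$ as the standard symbolic coding associated to a full-branched countable Markov interval map; the intertwining relation should read $T\circ \pi_T=\pi_T\circ\sigma$ (the displayed equation as written appears to be a typo, since $T$ acts on $[0,1]$ and $\sigma$ on $\Sigma_{\geq 1}$).

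First I would construct the inverse branches. By the full-branched hypothesis together with $|T'(x)|>1$ on $(0,1)$, the restriction $T|_{I_n}$ is a homeomorphism from $\overline{I_n}$ onto $[0,1]$ (after taking closures). Call its inverse $\psi_n:[0,1]\to\overline{I_n}$; it is of class $C^{1+\alpha}$ with $|\psi_n'|<1$ on $(0,1)$. For a word $w=(w_1,w_2,\dots)\in\Sigma_{\geq 1}$ and $k\geq 1$, define the cylinder
\[
[w_1,\dots,w_k]\;:=\;\psi_{w_1}\circ\psi_{w_2}\circ\cdots\circ\psi_{w_k}([0,1]).
\]
These are nested nonempty closed subintervals of $[0,1]$, so $\bigcap_k [w_1,\dots,w_k]$ is nonempty by compactness. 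I would then define
\[
\pi_T(w)\;:=\;\text{the (unique) point of}\ \bigcap_{k\geq 1}[w_1,\dots,w_k].
\]

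Next I would establish the two analytic facts that make this well defined. \emph{Bounded distortion}: using that $T$ is piecewise $C^{1+\alpha}$ and that $T^k$ maps $[w_1,\dots,w_k]$ diffeomorphically onto $[0,1]$, a standard Koebe/telescoping argument with $\log|T'|$ Hölder on each branch gives a constant $C>0$ (depending on the sequence via the first letter, but bounded on cylinders) with
\[
\frac{|(T^k)'(x)|}{|(T^k)'(y)|}\;\leq\; C,\qquad x,y\in[w_1,\dots,w_k].
\]
Combined with $(T^k)([w_1,\dots,w_k])=[0,1]$ and the mean value theorem, this yields
\[
\mathrm{diam}\bigl([w_1,\dots,w_k]\bigr)\;\leq\;\frac{C}{\inf_{z\in[w_1,\dots,w_k]}|(T^k)'(z)|}.
\]
\emph{Shrinking cylinders}: the hypotheses $|T'|>1$ on $(0,1)$ and $|T'|\geq 1$ at boundary points of domain pieces force $|(T^k)'(z)|\to\infty$ along any orbit in $\Lambda_T$ (orbits cannot linger at a single boundary point with $|T'|=1$ indefinitely because the points in $\mathrm{dom}(T)\cap\{0,1\}$ are isolated within the branch structure, and an orbit accumulating in the interior picks up uniform expansion). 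Hence the intersection is a single point, so $\pi_T$ is well defined and clearly lies in $\Lambda_T$.

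Finally I would verify the structural properties. \emph{Continuity}: the product topology on $\Sigma_{\geq 1}$ has as basis the cylinders; the diameter estimate above shows that $\pi_T([w_1,\dots,w_k])$ has diameter tending to zero, so $\pi_T$ is continuous. \emph{Equivariance}: by construction $\sigma(w)=(w_2,w_3,\dots)$, and applying $T$ to $\psi_{w_1}\circ\psi_{w_2}\circ\cdots([0,1])$ cancels the outer inverse branch, giving $T(\pi_T(w))=\pi_T(\sigma(w))$.

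The main obstacle I anticipate is justifying the shrinking-cylinder step without uniform expansion: for CMI-maps the derivative may approach $1$ at boundary points of the domain pieces (as for the Gauss map near the right endpoint), so one cannot just multiply contraction ratios. The rigorous argument requires combining the $C^{1+\alpha}$ distortion bound with a case analysis showing that orbits which do not accumulate only at neutral boundary points eventually pick up genuine expansion from the interior where $|T'|>1$, while orbits trapped at such boundary points lie in $\mathcal{F}$ rather than in $\Lambda_T$.
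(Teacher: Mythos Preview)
Your construction is exactly the standard symbolic coding that the paper invokes: the paper's own proof is simply a citation (``well known, see for example [Sa, Proposition~1.2]'') together with the remark that, since every iterate of $T$ is defined on $\Lambda_T$, the coding extends from $(0,1)\setminus\mathcal{B}_T$ to all of $\Lambda_T$. You also correctly note that the displayed intertwining relation should read $T\circ\pi_T=\pi_T\circ\sigma$.

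There is, however, a concrete error in your proposed resolution of the shrinking-cylinder issue. You assert that ``orbits trapped at such boundary points lie in $\mathcal{F}$ rather than in $\Lambda_T$.'' This is false. For the R\'enyi map (a CMI-map in the paper's sense) one has $R(0)=0$ with $|R'(0)|=1$, so $0$ is a neutral fixed point lying in $\Lambda_R$, not in $\mathcal{F}=\mathrm{dom}(T)\setminus\Lambda_T$; the same applies to any CMI-map with a parabolic fixed point at $0$ or $1$. Thus the dichotomy you set up does not cover this case, and moreover your bounded-distortion constant need not be uniform over cylinders accumulating at such a point. A correct argument is the following. If $\bigcap_k[w_1,\dots,w_k]$ were a nondegenerate interval $[a,b]$, then each $J_k:=T^k([a,b])$ is an interval contained in some $\overline{I_{w_{k+1}}}$ of length $|J_k|\geq b-a$, so only finitely many branch indices occur. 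Since $|T'|\geq 1$ everywhere and $|T'|>1$ on $(0,1)$, one has $|J_{k+1}|-|J_k|=\int_{J_k}(|T'|-1)>0$; the $|J_k|$ are strictly increasing and bounded by~$1$, hence $\int_{J_k}(|T'|-1)\to 0$. But each $J_k$ is an interval of length at least $b-a$ inside one of finitely many closed branch intervals, so it contains a subinterval of fixed length lying in a fixed compact subset of $(0,1)$ on which $|T'|-1$ is bounded below by a positive constant. This forces $\int_{J_k}(|T'|-1)$ to stay bounded away from zero, a contradiction. This is precisely the kind of argument the cited reference supplies.
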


\begin{proof}
The proof of this results is well known, see for example \cite[Proposition 1.2]{sa}, for points not belonging to the boundary of the partition $(0,1) \setminus \mathcal{B}_T$. Since all  iterates of the map $T$ are well defined in $\Lambda_T$, the map $\pi_T$ can actually be defined be defined in $\text{dom}(T) \setminus \mathcal{F}= \Lambda_T$.
\end{proof}

\begin{remark} \label{homeo}
The map $\pi_T :  \Sigma_{\geq 1} \setminus \pi_T^{-1} \left( \mathcal{B}_T \right) \to \Lambda_T \setminus \mathcal{B}_T$
is an homemorphism.
\end{remark}

\begin{remark}
Note that we do not require that $(0,1) \subset \bigcup_n I_n$. Also, the closure of the points in the boundary of the partition can be very large. It can be, for example,  a Cantor set.
\end{remark}

To every CMI-map $T$ we can associate an odometer in the following way.

\begin{definition} \label{odo_cmi}
Let $O_T: \Lambda_T \to [0,1]$ be the map defined by,
\begin{equation*}
O_T(x)=   \left( \pi_T  \circ O_1 \circ \pi_T^{-1} \right) (x).
\end{equation*} 
\end{definition}

It follows from the definition of $O_T$,  Remark \ref{homeo}  and Theorem \ref{main_odo} that, away from the boundary points,  the map $O_T$ is topologically conjugated to the odometer in a countable alphabet. If $\Lambda_T=[0,1)$ or $\Lambda_T=(0,1]$ the map $O_T$ can be thought of as an infinite interval exchange transformation.

\begin{theorem}
Let $T$ be an CMI-map and $O_T$ the associated odometer.  Then,  the map $O_T: \text{dom}(T) \setminus  \mathcal{B}_T: \to [0,1]$ is topologically conjugated to the $O_0$ odometer.
\end{theorem}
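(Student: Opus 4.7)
The proof is a straightforward composition of two conjugacies already established earlier in the paper, together with a small set-theoretic identification.

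First I would unpack Definition \ref{odo_cmi} together with Remark \ref{homeo}. By Remark \ref{homeo}, the coding map restricts to a homeomorphism
\[
\pi_T:\ \Sigma_{\geq 1}\setminus \pi_T^{-1}(\mathcal{B}_T)\ \longrightarrow\ \Lambda_T\setminus \mathcal{B}_T,
\]
and by Definition \ref{odo_cmi} the identity $O_T\circ \pi_T=\pi_T\circ O_1$ holds on its whole domain. Hence $\pi_T$ topologically conjugates $O_1|_{\Sigma_{\geq 1}\setminus \pi_T^{-1}(\mathcal{B}_T)}$ with $O_T|_{\Lambda_T\setminus \mathcal{B}_T}$.

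Second, I would invoke the Remark following Theorem \ref{main_odo}: the map $O_k:\Sigma_{\geq k}\to\Sigma_{\geq k}$ is topologically conjugate to $O_0$ for every $k\in\N$. For $k=1$ this conjugacy is implemented by the coordinate-shift homeomorphism $\psi:\Sigma_{\geq 1}\to\Sigma_{\geq 0}$ given by $\psi(w_1,w_2,\ldots)=(w_1-1,w_2-1,\ldots)$, which one verifies directly satisfies $\psi\circ O_1=O_0\circ\psi$. Composing the two homeomorphisms yields
\[
\Phi:=\psi\circ \pi_T^{-1}:\ \Lambda_T\setminus \mathcal{B}_T\ \longrightarrow\ \psi\!\left(\Sigma_{\geq 1}\setminus \pi_T^{-1}(\mathcal{B}_T)\right)\ \subset\ \Sigma_{\geq 0},
\]
and $\Phi\circ O_T=O_0\circ \Phi$ follows from the two intertwining relations. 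This is precisely the topological conjugacy asserted.

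It remains to identify the set appearing in the statement. I claim $\mathrm{dom}(T)\setminus \mathcal{B}_T=\Lambda_T\setminus \mathcal{B}_T$. Indeed, if $x\in \mathrm{dom}(T)\setminus \mathcal{B}_T$, then no forward iterate $T^n(x)$ lies in $\{0,1\}\cup\bigcup_n \partial I_n$; hence every $T^n(x)$ lies in $\bigcup_n \mathrm{int}(I_n)\subset \mathrm{dom}(T)$, so all iterates of $x$ are defined and $x\in\Lambda_T$. The reverse inclusion is trivial.

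The main point where one must be careful — and the only place where the plan could go wrong — is the claim that the conjugacy is a bona-fide conjugacy of dynamical systems, i.e.\ that the set $\Sigma_{\geq 1}\setminus \pi_T^{-1}(\mathcal{B}_T)$ is $O_1$-invariant. This is not automatic because $O_1$ and the shift $\sigma$ are dynamically very different, so the $\sigma$-invariance of $\pi_T^{-1}(\mathcal{B}_T)$ does not immediately transfer. However, $\mathcal{B}_T$ is a countable set consisting of points whose $T$-orbit is eventually trapped on partition boundaries, and a direct check shows that $O_T$ preserves this property (the relation $O_T\circ\pi_T=\pi_T\circ O_1$ forces $O_T(\Lambda_T\setminus\mathcal{B}_T)\subseteq \Lambda_T$, and one argues that the image also avoids $\mathcal{B}_T$ since $\mathcal{B}_T$ is $T^{\pm 1}$-invariant and the $O_1$-iterate of a sequence coding a non-boundary orbit again codes a non-boundary orbit).
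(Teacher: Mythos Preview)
Your proposal is correct and follows exactly the route the paper indicates: the paper gives no formal proof of this theorem but states just before it that ``it follows from the definition of $O_T$, Remark~\ref{homeo} and Theorem~\ref{main_odo}'', which is precisely the chain of conjugacies you spell out (using the Remark after Theorem~\ref{main_odo} for the step $O_1\simeq O_0$). Your added identification $\mathrm{dom}(T)\setminus\mathcal{B}_T=\Lambda_T\setminus\mathcal{B}_T$ and your discussion of the $O_1$-invariance of $\Sigma_{\geq 1}\setminus\pi_T^{-1}(\mathcal{B}_T)$ make explicit points that the paper leaves implicit; in this sense your write-up is actually more careful than the paper's one-line justification.
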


\begin{corollary}
The map $O_T: \text{dom}(T) \setminus  \mathcal{B}_T: \to [0,1]$   satisfies the following dynamical properties:
\begin{enumerate}
\item The map $O_T$  is minimal and uniquely ergodic.
\item The map $O_T$ has zero topological entropy.
\item The map $O_T$  has purely discrete spectrum and it is equicontinuous.
\end{enumerate}
\end{corollary}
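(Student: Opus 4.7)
The plan is to chain together the topological conjugacies already established. The theorem immediately preceding the corollary provides a conjugacy between $O_T$ on $\text{dom}(T) \setminus \mathcal{B}_T$ and the $0$-odometer $O_0$, and Theorem \ref{main_odo} gives a further conjugacy between $O_0$ and the restriction $O|_{\Sigma_{2,0}}$ of the dyadic odometer, via the homeomorphism $\eta$. Since each of the five listed properties is a topological conjugacy invariant, it suffices to verify them for $O|_{\Sigma_{2,0}}$, and then to reduce that verification to Proposition \ref{erg_odo}, which records the analogous statements for the full dyadic odometer $O$ on the compact space $\Sigma_2$.

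The routine step is to check that the five properties descend from $\Sigma_2$ to the dense invariant subset $\Sigma_{2,0}$ obtained by removing the countable set $\Sigma_{\bar 1}$. First I would observe that $\Sigma_{2,0}$ is forward-invariant under $O$, since $O$ only alters a finite initial block and therefore preserves the property of having infinitely many zeros. Minimality then follows because any forward $O$-orbit, being dense in $\Sigma_2$ and contained in $\Sigma_{2,0}$, is automatically dense in $\Sigma_{2,0}$ with the induced topology. For unique ergodicity I would use that the $(1/2,1/2)$-Bernoulli measure gives zero mass to the countable set $\Sigma_{\bar 1}$ and restricts to an invariant probability on $\Sigma_{2,0}$, while conversely any invariant probability on $\Sigma_{2,0}$ extends by zero to an $O$-invariant probability on $\Sigma_2$, which by Proposition \ref{erg_odo} must be the Bernoulli measure. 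Zero topological entropy passes directly to invariant subsystems. Purely discrete spectrum and equicontinuity are likewise unaffected by removing a null countable set: the Koopman operator on $L^2$ of the restricted measure is unitarily equivalent to the original, and the metric on $\Sigma_{2,0}$ is the restriction of the ultrametric on $\Sigma_2$, so the uniform equicontinuity of $O$ automatically restricts.

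The main obstacle, insofar as there is one, lies in the non-compactness issues attached to topological entropy and equicontinuity, notions traditionally formulated on compact systems. These are settled by observing that both notions make sense on an arbitrary metrizable dynamical system and behave well under passage to an invariant subset in the present case, and then the whole package is transported to $O_T$ through the composition of homeomorphisms $\pi_T \circ \eta : \Sigma_{2,0} \to \Lambda_T \setminus \mathcal{B}_T$, which delivers the claimed properties on $\text{dom}(T) \setminus \mathcal{B}_T$.
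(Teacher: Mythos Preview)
Your proposal is correct and follows exactly the approach implicit in the paper: the corollary is stated without proof, being an immediate consequence of the preceding theorem (conjugacy of $O_T$ with $O_0$), Theorem~\ref{main_odo} (conjugacy of $O_0$ with $O|_{\Sigma_{2,0}}$), and Proposition~\ref{erg_odo}. Your additional care in checking that the properties descend from $\Sigma_2$ to the dense invariant subset $\Sigma_{2,0}$ is more detail than the paper itself provides, but is entirely in the same spirit.
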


The behavior of $O_T$ on points with finite orbits $\mathcal{F}$ will depend on how we define $T$ on the boundary points $\partial I$. In general, we are interested in continuous extensions of $T$ at least from one-side at each point in $\partial I$.  In what follows we provide explicit examples of maps $O_T$. 
%
%
%
%
%
%

%
%

\subsection{The odometer associated to the continued fraction expansion} \label{cfe}

Recall that, see section \ref{con_f}, every number in $(0,1)$ can be written as a continued fraction expansion. There exists CMI-map closely related to this expansion. Indeed, the \emph{Gauss} map  $G :(0,1] \to (0,1]$, is the interval map defined by 
\begin{equation*}
G(x)= \frac{1}{x} -\left[ \frac{1}{x} \right], 
\end{equation*}
where $[\cdot]$ is the integer part. If $x=[a_1 a_2 a_3 \dots]$ then $G(x)=[ a_2 a_3 \dots]$.  The repeller for $G$ is $\Lambda_G=[0,1]\setminus \Q$ and the set with finite orbits is $\mathcal{F}=\Q$. We will assume that the continued fraction expansion of rationals does no ends with a one. Denote by $O_G:(0,1] \to (0,1]$ the transformation obtained from $G$ as given in Definition \ref{odo_cmi} .  The action of $O_G$ over the irrational numbers is, by definition, given by:
\begin{lemma} 
 If $x=[a_1, a_2, a_3, \dots] \in [0,1)$ then
 \begin{equation*}
 O_G(x)=[\underbrace{1,1,\dots,1}_{(a_1-1)\textrm{-times}}, a_2+ 1, a_3, a_4, \dots].
 \end{equation*}
\end{lemma}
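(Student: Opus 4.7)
The plan is to unwind the definition of $O_G$ straight through the symbolic picture, so the lemma becomes a computation. By Definition \ref{odo_cmi}, $O_G = \pi_G \circ O_1 \circ \pi_G^{-1}$, so I would first identify $\pi_G$ with the continued fraction coding: on the branch $I_n = (1/(n+1), 1/n]$ of the Gauss map one has $[1/x] = n$ and $G(x)$ ``shifts'' the expansion, so iterating gives that $\pi_G(a_1, a_2, a_3, \dots)$ is exactly the number with continued fraction $[a_1, a_2, a_3, \dots]$. Since $x \in [0,1) \setminus \Q$ (the hypothesis of the lemma, which is the irrational case where the continued fraction expansion is unique and infinite), Remark \ref{homeo} gives that $\pi_G^{-1}(x) = (a_1, a_2, a_3, \dots) \in \Sigma_{\geq 1}$ in the locus where $\pi_G$ is a homeomorphism.

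Next I would apply the infinite-sequence version of the odometer from the Remark following Theorem \ref{main_odo} (with $k=1$) to this sequence:
\[
O_1(a_1, a_2, a_3, \dots) = (\underbrace{1, \dots, 1}_{(a_1-1)\text{-times}}, a_2 + 1, a_3, a_4, \dots).
\]
The resulting sequence is again an infinite sequence in $\Sigma_{\geq 1}$, so it still corresponds to an irrational under $\pi_G$; applying $\pi_G$ returns the number
\[
[\underbrace{1, \dots, 1}_{(a_1-1)\text{-times}}, a_2 + 1, a_3, a_4, \dots],
\]
which is the claimed formula.

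There is no real obstacle here; the whole point of the abstract framework of Sections \ref{sec:def} and \ref{sec:i} is that, once one has identified the Gauss coding $\pi_G$ with the continued fraction expansion, the formula for $O_G$ is a mechanical consequence of the formula for $O_1$. The only minor checks are that, when $a_1 = 1$, the prefix of $1$'s is empty (so $O_G(x) = [a_2 + 1, a_3, \dots]$, which is consistent), and that $O_1$ preserves infiniteness of the sequence so we remain in the irrational locus where $\pi_G$ is invertible.
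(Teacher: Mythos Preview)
Your proposal is correct and matches the paper's treatment: the paper states that this formula holds ``by definition'' and gives no further proof, and your argument is precisely the explicit unwinding of Definition~\ref{odo_cmi} via the continued fraction coding that justifies this claim. The only difference is that you spell out the three steps ($\pi_G^{-1}$, then $O_1$ from the $k=1$ Remark after Theorem~\ref{main_odo}, then $\pi_G$) rather than leaving them implicit.
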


We have two natural choices to define $O_G$ at the boundary points $[n]$. In the first case we define $O_G$ to be continuous from the right, that is
 \[
 O_G[n]=\lim_{m\to \infty}O_G[(n-1)\ 1 \ m]=\lim_{m\to \infty} [\underbrace{1\dots 1}_{n-2}\ 2 \ m]= [\underbrace{1\dots 1}_{n-2}\ 2 ].
 \]
In that case, $[n]$ is periodic for $O_G$, with period $2^{n-2}$. With this choice, every rational number $[a_1 a_2 \dots a_j]$ is periodic, with period $2^{\sum_i{a_i}-2}$. This corresponds to the cyclic odometer, see Definition \ref{odo_finite}.

On the other hand, we could define $O_G$ to be continuous from the left. In this case we obtain,
\[
 O_G[n]=\lim_{m\to \infty}O_G[n \ m]=\lim_{m\to \infty} [\underbrace{1\dots 1}_{n-1}\   (m+1)]= [\underbrace{1\dots 1}_{n-1} ].
 \] 
 With this choice, the orbit of every rational number  goes up the Kepler Tree until it reaches $[1]$.  Hence, every rational number in $(0,1)$  reaches $[1]$ in finite time. In what follows we will consider the first choice, that is, we assume $O_G$ to be continuous from the right.  Recall that the {\it Fibonacci Sequence} is defined by,
\begin{equation*}
f_{0}=0, \ f_1=1, \ f_{n+1}=f_n+f_{n-1}.
\end{equation*}

The following result explicitly describes the odometer $O_G$ (when it is continuous from the right).

\begin{proposition} \label{action-gauss}
For $x\in \left[\frac{1}{n+1}, \frac{1}{n}\right) \cap \Q^c$ we have that
\begin{equation*}\label{fibo_odo}
O_{G}(x)=\frac{x(f_{n-1}-nf_{n})+f_{n}}{x(f_n-nf_{n+1})+f_{n+1}}.
\end{equation*}
If $x=[a_1, \dots , a_n] \in \Q$ then the point $x$ is periodic for $O_G$ with period $2^{-2+\sum_{i=1}^n a_i}$.
\end{proposition}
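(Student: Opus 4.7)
The plan is to handle the irrational and rational claims separately: the irrational case is a direct algebraic computation using the preceding lemma and the Fibonacci recursion, while the rational case is a combinatorial consequence of the cyclic structure of $O_G$ on the odometric tree $\mathcal{T}_1$.

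For $x \in [1/(n+1), 1/n) \cap \Q^c$ the continued-fraction expansion of $x$ starts with $n$, so $x = [n, a_2, a_3, \dots]$, and the preceding lemma gives $O_G(x) = [\underbrace{1, \dots, 1}_{n-1}, a_2+1, a_3, \dots]$. To express this as a Möbius map of $x$ I first set $z = [a_2, a_3, \dots] = G(x) = (1-nx)/x$ and observe that the tail after the initial ones, $y := [a_2+1, a_3, \dots]$, satisfies
\[
y = \frac{1}{(a_2+1) + (1/z - a_2)} = \frac{z}{z+1} = \frac{1-nx}{1-(n-1)x}.
\]
Since the convergents of $[1, 1, 1, \dots]$ are precisely $p_k/q_k = f_k/f_{k+1}$, the standard tail-insertion identity $[b_1, \dots, b_k, y] = (p_k + y p_{k-1})/(q_k + y q_{k-1})$ gives
\[
O_G(x) = \frac{f_{n-1} + y f_{n-2}}{f_n + y f_{n-1}}.
\]
Substituting $y$ and clearing the common factor $1-(n-1)x$, the numerator becomes $f_{n-1}(1-(n-1)x) + f_{n-2}(1-nx) = f_n - x[(n-1)f_{n-1} + nf_{n-2}]$; the bracketed quantity equals $n(f_{n-1}+f_{n-2}) - f_{n-1} = nf_n - f_{n-1}$ by the Fibonacci recursion, which collapses the numerator to $f_n + x(f_{n-1} - nf_n)$. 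An identical manipulation for the denominator yields $f_{n+1} + x(f_n - nf_{n+1})$, matching the claimed formula.

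For the rational claim, I identify the canonical expansion $x = [a_1, \dots, a_n]$ (with $a_n \geq 2$) with the finite word $(a_1, \dots, a_n) \in \Omega_1$ of sum $s = \sum a_i$. Both the length-$>1$ odometric rule and the right-continuous convention $O_G[n] = [\underbrace{1,\dots,1}_{n-2}, 2]$ preserve $s$, so the orbit of $x$ stays within the set $R_s$ of canonical rational words at level $s$. A short count shows $|R_s| = 2^{s-2}$: by Lemma~\ref{prop_tree_1} there are $2^{s-1}$ words at level $s$, and those ending in $1$ biject (by dropping the final letter) with words at level $s-1$, contributing $2^{s-2}$ non-rational words; the remaining $2^{s-2}$ end in $\geq 2$. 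By the reverse-lexicographic definition of $<_{\Omega_1}$, every word ending in $1$ precedes every word ending in $\geq 2$, so $R_s$ occupies the last $2^{s-2}$ positions of level $s$, running from $(\underbrace{1,\dots,1}_{s-2}, 2)$ to $(s)$. For $\omega \in R_s$ with $\omega \neq (s)$, the word $\omega$ has length $>1$ (the only length-$1$ word at level $s$ is $(s)$ itself), so Lemma~\ref{lema_1_count} gives that $O_G(\omega) = O_1(\omega)$ is the immediate $<_{\Omega_1}$-successor of $\omega$, which lies in $R_s$ since $R_s$ is a contiguous tail of level $s$; at $\omega = (s)$ the right-continuous convention sends $\omega$ to the leftmost element of $R_s$, closing the cycle. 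Hence $O_G$ permutes $R_s$ as a single cycle of length $2^{s-2}$, giving $x$ exact period $2^{s-2}$.

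The main obstacle will be the Fibonacci bookkeeping in the irrational case, specifically using the identities $f_{n-1} + f_{n-2} = f_n$ and $(n-1)f_{n-1} + nf_{n-2} = nf_n - f_{n-1}$ (together with their shifted analogues at the next index) to collapse the Möbius coefficients into the compact form of the proposition. The rational part is largely combinatorial once one observes that $O_G$ preserves the level and that the canonical rationals form the right tail of each level in $<_{\Omega_1}$-order.
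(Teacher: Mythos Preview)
Your proof is correct. For the irrational formula, your argument is essentially the paper's: both write $O_G(x)$ as a M\"obius transformation in a tail parameter and collapse the coefficients via the Fibonacci recursion. The only cosmetic difference is that the paper runs an explicit induction on $n$ for the identity $[\underbrace{1\dots1}_{n-1}\,(a{+}1)] = \dfrac{f_{n-1}a+f_n}{f_na+f_{n+1}}$ (with $a = x/(1-nx)$), whereas you invoke the standard convergent/tail-insertion identity, which packages the same induction. Your parameter $y = [a_2{+}1,a_3,\dots]$ is just $1/(a{+}1)$ in the paper's notation, and after that the algebra is identical.

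For the rational claim the paper's own proof of the proposition does not actually argue the periodicity; it is only asserted in the discussion preceding the statement. Your argument---that the canonical words at level $s$ (those ending in a digit $\ge 2$) form the final $2^{s-2}$ positions of $\mathcal{T}_1^s$ in the $<_{\Omega_1}$-order, that $O_1$ advances by one position within this block by Lemma~\ref{lema_1_count}, and that the right-continuous convention $O_G[s]=[\underbrace{1,\dots,1}_{s-2},2]$ closes the cycle at the leftmost canonical word---fills this in cleanly and is a genuine improvement over the paper's treatment.
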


\begin{proof} 
Any $x\in \left[\frac{1}{n+1}, \frac{1}{n}\right)$ can be uniquely written as
\begin{equation}\label{x_a}
x=[n\ a]=\frac{1}{n+\frac{1}{a}},
\end{equation}
with  $a\geq 1$ a real number. The action of the odometer is
\begin{equation}\label{fibo_a}
O_{G}[n\ a]=[\underbrace{1\dots 1 }_{n-1}\ (a+1)].
\end{equation}
We claim that (\ref{fibo_a}) leads to
\begin{equation}\label{odo_a}
O_{G}[n\ a]=\frac{f_{n-1}a+f_n}{f_{n}a+f_{n+1}}.
\end{equation}
Indeed, reasoning by induction on $n$, we have the base case
\begin{eqnarray*}
O_{G}[1\ a]=[a+1]=\frac{1}{a+1}.
\end{eqnarray*}
Consider now the case $(n+1)$, assuming that (\ref{odo_a}) holds
\begin{eqnarray*}
O_{G}[(n+1)\ a]&=&[\underbrace{1\dots 1}_{n} \ (a+1)]\\
&=&\frac{1}{1+[\underbrace{1\dots 1}_{n-1}\ (a+1)]}\\
&=&\frac{1}{1+\frac{f_{n-1}a+f_n}{f_{n}a+f_{n+1}}}\\
&=&\frac{f_{n}a+f_{n+1}}{a(f_{n}+f_{n-1})+f_{n+1}+f_{n}}=\frac{f_{n}a+f_{n+1}}{f_{n+1}a+f_{n+2}}.
\end{eqnarray*}

The equation (\ref{x_a}) leads to
\[
a=\frac{x}{1-nx},
\]
and then
\begin{eqnarray*}
O_{G}(x)&=&\frac{f_{n-1}\left(\frac{x}{1-nx}\right)+f_n}{f_{n}\left(\frac{x}{1-nx}\right)+f_{n+1}}\\
&=&\frac{x(f_{n-1}-nf_n)+f_n}{x(f_n-nf_{n+1})+f_{n+1}}.
\end{eqnarray*}
\end{proof}

The map $O_G$ has one fully supported ergodic measure which corresponds to the projection of the unique invariant measure for $O_1$. Moreover, since the rational points are  periodic, the map also preserves countably many ergodic measures supported on periodic orbits. There are no other ergodic invariant measures. We therefore have,

\begin{lemma}
The space of invariant probability measures for $O_G$ is a Choquet simplex with countably many extreme points.
\end{lemma}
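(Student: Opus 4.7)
The plan is to enumerate the ergodic invariant probability measures of $O_G$ and then invoke the classical fact that the set of invariant probabilities of a Borel self-map of a standard Borel space is a Choquet simplex whose extreme points are precisely the ergodic measures.

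The phase space $(0,1]$ splits into two $O_G$-invariant Borel pieces: the countable set $\Q\cap(0,1)$ of rationals, every point of which is periodic by Proposition \ref{action-gauss}, and its complement in $\Lambda_G$. Any ergodic invariant probability measure $\mu$ either assigns positive mass to $\Q\cap(0,1)$ or it does not. In the first case, invariance together with countability forces $\mu$ to be supported on a finite union of periodic orbits, and ergodicity collapses its support to a single orbit; hence $\mu$ is the uniformly distributed measure on a single periodic orbit. Since distinct rationals lie on distinct $O_G$-orbits, this produces countably many pairwise distinct atomic ergodic measures, one for each rational in $(0,1)$.

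In the second case, $\mu$ is supported on the irrationals in $\Lambda_G$. By Remark \ref{homeo} together with Definition \ref{odo_cmi}, off the countable boundary set $\mathcal B_G$ the map $O_G$ is topologically conjugate via $\pi_G$ to $O_1$ on $\Sigma_{\geq 1}$, which by Theorem \ref{main_odo} and Proposition \ref{erg_odo} is uniquely ergodic. Pushing the unique invariant measure of $O_1$ forward by $\pi_G$ yields one ergodic, fully supported measure on $\Lambda_G$, and unique ergodicity forbids any further non-atomic ergodic measure on the irrationals. Thus the ergodic measures are exhausted by the countable family of periodic-orbit measures together with this single non-atomic measure.

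Finally, one invokes the standard fact (see, e.g., Phelps's treatise or the ergodic decomposition theorem) that the set $\mathcal M_{O_G}$ of $O_G$-invariant Borel probability measures on the compact metric space $[0,1]$, equipped with the weak-$*$ topology, is a Choquet simplex whose extreme points are exactly the ergodic invariant measures; combined with the count above, this yields the conclusion. The main subtlety in the argument is step three, where one must rule out additional non-atomic ergodic measures that could conceivably hide in the set $\mathcal B_G$ of coding ambiguities; this is handled by noting that $\mathcal B_G$ is countable, so any ergodic measure giving it positive mass is atomic and concentrated on a single periodic orbit, and thus already accounted for among the atomic measures listed in step two.
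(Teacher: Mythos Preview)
Your approach is the same as the paper's: enumerate the ergodic measures as (i) the unique fully supported measure obtained by pushing forward the $O_1$-invariant measure, and (ii) the countably many periodic-orbit measures on the rationals, then invoke the simplex structure of the space of invariant measures. Your write-up is considerably more detailed than the paper's one-paragraph justification preceding the lemma.

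One slip to fix: the claim ``distinct rationals lie on distinct $O_G$-orbits'' is false. For instance $[3]=\tfrac13$ and $[1\,2]=\tfrac23$ share the same period-$2$ orbit under the right-continuous convention. What you actually need is that there are \emph{infinitely} many periodic orbits; this follows because the periods $2^{-2+\sum_i a_i}$ from Proposition~\ref{action-gauss} are unbounded (take $x=[n]$ with $n\to\infty$), while each orbit is finite and the rationals are countable. The conclusion---countably many atomic ergodic measures---is unaffected.

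A second point worth tightening: the Choquet-simplex statement you cite in the final paragraph is usually formulated for \emph{continuous} self-maps of compact metric spaces, whereas $O_G$ is only piecewise continuous on $(0,1]$. The safer route, which you already gesture at in your opening sentence, is the Borel version via the ergodic decomposition theorem for measurable maps on standard Borel spaces. The paper itself does not address this subtlety either.
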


\begin{remark}
The function $O_G$, or its inverse, has occurred previously in the literature. For example, in \cite{vpb}  Viader, Paradis and Bibiloni proved that the  question mark function, introduced by Minkowski, corresponds to the asymptotic distribution of  the  enumeration of the rational numbers in $(0,1)$ given by the Kepler tree, see section \ref{con_f}. Note, however, that they do not explicitly mention the Kepler tree and that their construction of the order is obtained in a different, although equivalent, way than the one induced by the ododmetric action. In \cite[Section 2.1]{vpb}, computing the successor in the order of the Kepler tree,  they obtain an explicit formula for the inverse of $O_G$. Also, Panti \cite{pa} constructed Kakutani von-Neumann odometers in higher dimensional simplexes. The one dimensional version of his construction is related to the inverse of  the map $O_G$.
\end{remark}

\begin{remark} 
We provide a flow version of the example described on this section that may be of interest.  Let $\tau:(0,1] \to \R$ be a continuous positive map bounded away from zero.  Denote by 
$Y=\left\{ (x,t)  \in (0,1) \times \R :  0 \leq t \leq \tau(x)	\right\}$. Consider the equivalence  relation $(x,\tau(x)) \sim (O_G(x), 0)$ and the quotient space $Y/ \sim$. The flow $\Phi=(\phi_t)_t$ defined on the quotient space by
\begin{equation*}
\phi_t(x,s)=(x, s+t) \quad \text{ whenever } \quad s+t \in [0, \tau(x)].
\end{equation*}
This flow satisfies the following properties: it has one, and only one, fully supported measure and countably many measures supported on periodic orbits. Every point not in a periodic orbit is generic for the fully supported measure.
A renormalization formula for the flow $\Phi$ and the the suspension flow with base the shift on countably many symbols and roof function $\tau$ can be obtain from the corresponding property established in section \ref{reno}.
\end{remark}

\subsection{The odometer associated to the backward continued fraction expansion}\label{sbcf}

Recall that, see section \ref{con_b}, every number in $(0,1)$ can be written as a backward continued fraction expansion. There exists CMI-map closely related to this expansion. Indeed, the {\it Renyi map}, $R:[0,1) \to [0,1)$, is defined by
\begin{equation*}
R(x)=\frac{1}{1-x} -\left[\frac{1}{1-x} \right].  
\end{equation*}
This map acts as the shift on the backward continued fraction (see \cite[Chapter 11]{dk}). That is, 
if $x =[a_1 a_2 a_3 \dots]_B$   then  $R(x)=[a_2 a_3 \dots]_B$. Note that $R(0)=0$ and that $R'(0)=1$. In particular, the Renyi map is not uniformly expanding.   Renyi \cite{re} showed that there exists an infinite $\sigma-$finite invariant measure, $\mu_R,$  absolutely continuous with respect to the Lebesgue measure. It is defined by
\[ \mu_R(A) = \int_A \dfrac{1}{x} \ dx, \]
where $A \subset [0,1)$ is a Borel set.  There is no finite  invariant measure absolutely continuous with respect to the Lebesgue measure.   Denote by $O_R:[0,1) \to [0,1)$ the transformation obtained from $R$,  as given in Definition \ref{odo_cmi}. In \cite{ip} we proved, using Minkowski's question mark function, that:

\begin{proposition} \label{bcf}
For every $x \in [0,1)$ we have that,
\begin{equation*}
O_R(x)= \frac{1}{2\left[\frac{1}{1-x}	\right] + 1 - \frac{1}{1-x}}.
\end{equation*}
\end{proposition}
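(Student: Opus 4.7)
The plan is to unwind Definition \ref{odo_cmi} applied to the Renyi map $R$ and identify the resulting M\"obius action explicitly. The full branches of $R$ are the intervals $I_n=[\frac{n-1}{n},\,\frac{n}{n+1})$, $n\geq 1$, on which $n=[1/(1-x)]$ and $R(x)=1/(1-x)-n$. Under the natural CMI coding $\pi_R\colon \Sigma_{\geq 1}\to [0,1)$, a symbol $a_i=n$ corresponds to the backward continued fraction digit $b_i=n+1$, so that $\pi_R(a_1,a_2,\dots)=[a_1+1,\,a_2+1,\,\dots]_B$.

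First, by the remark following Theorem \ref{main_odo} applied with $k=1$, if $\pi_R^{-1}(x)=(n,a_2,a_3,\dots)$ then
\begin{equation*}
O_1(n, a_2, a_3, \dots) = (\underbrace{1,\dots,1}_{n-1\text{ times}},\, a_2+1,\, a_3,\, \dots).
\end{equation*}
Pushing this back through $\pi_R$ and writing $y=R(x)=[b_2\,b_3\,\dots]_B$ with $b_i=a_i+1$, we obtain
\begin{equation*}
O_R(x) = [\underbrace{2,\dots,2}_{n-1 \text{ times}},\, b_2+1,\, b_3,\, b_4,\, \dots]_B.
\end{equation*}

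Next, I would verify directly from the recursive definition of the BCF the two elementary identities
\begin{equation*}
[c_1+1,\, c_2,\, \dots]_B = \frac{1}{2-y},\qquad [2,\, c_1,\, c_2,\, \dots]_B = \frac{y}{1+y},
\end{equation*}
valid for any $y=[c_1\,c_2\,\dots]_B\in[0,1)$; both follow by solving $1-y = 1/(c_1-t)$ for $t$ and substituting in the corresponding defining expression. Iterating the second identity shows that prepending $k$ twos corresponds to the M\"obius map $z\mapsto z/(1+kz)$, either by induction on $k$ or by raising the $SL_2$ matrix $\bigl(\begin{smallmatrix}1&0\\1&1\end{smallmatrix}\bigr)$ to the $k$-th power.

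Combining these three pieces with $y=R(x)=1/(1-x)-n$ yields
\begin{equation*}
O_R(x) = \frac{1/(2-y)}{1+(n-1)/(2-y)} = \frac{1}{n+1-y} = \frac{1}{2n+1-\frac{1}{1-x}},
\end{equation*}
and since $n=[1/(1-x)]$ on $I_n$ this is exactly the stated formula; the case of rational $x$ follows either by continuity or by using the infinite BCF expansion with tail $\overline{2}$. The main technical point is really only the bookkeeping between the CMI alphabet $\N_1$ and the BCF alphabet $\N_2$; once the two M\"obius identities above are in hand, the rest is just algebraic simplification.
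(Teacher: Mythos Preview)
Your argument is correct. The two M\"obius identities you isolate,
\[
[c_1{+}1,c_2,\dots]_B=\frac{1}{2-y}\quad\text{and}\quad [2,c_1,c_2,\dots]_B=\frac{y}{1+y}\qquad (y=[c_1,c_2,\dots]_B),
\]
are exactly what is needed, and the bookkeeping $b_i=a_i+1$ between the $\Sigma_{\ge 1}$--coding used in Definition~\ref{odo_cmi} and the BCF alphabet $\N_2$ is handled correctly (so that the $n-1$ leading $2$'s match the $a_1-2$ of the subsequent Lemma). The final simplification $1/(n+1-y)=1/(2n+1-1/(1-x))$ with $n=[1/(1-x)]$ and $y=R(x)$ is just arithmetic, and the boundary/rational case is indeed covered by passing to the infinite expansion with tail $\overline{2}$.

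The route, however, is not the one the paper takes. The paper does not prove Proposition~\ref{bcf} here at all: it quotes it from \cite{ip}, where the formula is obtained via Minkowski's question mark function, i.e.\ by conjugating $O_R$ to the interval realization $\tilde O$ of the dyadic odometer and transporting the explicit formula~\eqref{odometro_en_I} back through~$?$. Your proof is instead a direct symbolic computation in the spirit of the paper's own proof of Proposition~\ref{action-gauss} for the Gauss map: write $x=[n\ a]$ in the relevant numeration, apply the odometer symbolically, and collapse the resulting finite prefix using the branch inverses. What this buys you is a self-contained argument that stays entirely within the BCF machinery already set up in Section~\ref{con_b}, avoiding the external input of the question mark conjugacy; what the $?$--function approach buys is a conceptual explanation of \emph{why} the formula has the shape it does (it is literally the pullback of the piecewise-affine dyadic odometer), together with the immediate identification of the unique invariant measure mentioned after Proposition~\ref{bcf}.
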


Moreover, we established the following result which, in this setting, follows directly from the definition of $O_R$:
\begin{lemma} 
 If $x=[a_1, a_2, a_3, \dots]_B\in [0,1)$ then
 \begin{equation*}
 O_R(x)=[\underbrace{2,2,\dots,2}_{(a_1-2)\textrm{-times}}, a_2+ 1, a_3, a_4, \dots]_B.
 \end{equation*}
\end{lemma}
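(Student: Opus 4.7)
The plan is to unpack the definition of $O_R$ through the symbolic coding, at which point the formula is merely the translation of the definition of the odometer on the shift space $\Sigma_{\geq 2}$.

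First, I would recall that the Renyi map $R$ is conjugated to the shift on its natural coding space via the backward continued fraction expansion. Concretely, the coding map is
\begin{equation*}
\pi_R \colon \Sigma_{\geq 2} \longrightarrow [0,1), \qquad \pi_R(a_1, a_2, a_3, \dots) = [a_1, a_2, a_3, \dots]_B,
\end{equation*}
and it satisfies $R \circ \pi_R = \pi_R \circ \sigma$. By the convention \eqref{finite_2_B} (every rational is represented by its unique \emph{infinite} expansion ending in a tail of $2$'s), $\pi_R$ is in fact a bijection onto $[0,1)$, and by Remark \ref{homeo} a homeomorphism off the boundary set.

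Next, in Definition \ref{odo_cmi} the associated odometer $O_T$ is written using $O_1$ on $\Sigma_{\geq 1}$, but for the Renyi map the natural alphabet is $\N_{\geq 2}$. I would therefore appeal to the Remark following Theorem \ref{main_odo}, which states that the corresponding odometer $O_2 \colon \Sigma_{\geq 2} \to \Sigma_{\geq 2}$, given by
\begin{equation*}
O_2(w_1, w_2, w_3, \dots) = (\underbrace{2,\dots, 2}_{(w_1-2)\text{-times}},\, w_2+1,\, w_3, \dots),
\end{equation*}
is topologically conjugated to $O_1$. Thus, adapting Definition \ref{odo_cmi} to this alphabet, one has $O_R = \pi_R \circ O_2 \circ \pi_R^{-1}$.

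Finally, given $x = [a_1, a_2, a_3, \dots]_B$, I would simply compute $\pi_R^{-1}(x) = (a_1, a_2, a_3, \dots)$, apply the formula for $O_2$, and then re-encode via $\pi_R$ to obtain
\begin{equation*}
O_R(x) = \pi_R\bigl(\underbrace{2, \dots, 2}_{(a_1-2)\text{-times}},\, a_2+1,\, a_3, \dots\bigr) = [\underbrace{2, \dots, 2}_{(a_1-2)\text{-times}},\, a_2+1,\, a_3, \dots]_B,
\end{equation*}
which is precisely the claim. There is no real obstacle here: the lemma is essentially a restatement of the symbolic definition. The only point that merits care is that we systematically use the \emph{infinite} backward continued fraction expansion of rationals, so that $\pi_R$ is a genuine bijection and the identity above is valid pointwise on the entire interval $[0,1)$, not only on the irrationals.
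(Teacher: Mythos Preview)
Your proposal is correct and matches the paper's own stance: the paper does not give a separate proof but states that the lemma ``follows directly from the definition of $O_R$'', which is exactly what you have carefully unpacked. The only addition you make beyond the paper is the explicit passage from $O_1$ on $\Sigma_{\geq 1}$ to $O_2$ on $\Sigma_{\geq 2}$ via the Remark after Theorem~\ref{main_odo}, which is the natural way to reconcile Definition~\ref{odo_cmi} with the Renyi alphabet.
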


Interest in the map $O_R$ arises from the fact that the orbit of $x=0$ induces an explicit  bijection between the sets of natural numbers  and the rational numbers  in  $(0,1)$. Indeed, the map $n \mapsto O_R^n(0)$ is the required bijection. 
Of course, this readily follows Remark \ref{count_rat} and the general ideas developed in this article. However, this fact was first observed by Newman, while solving  a problem posed by Knuth \cite{k}, although the way $O_R$ is written is formally different. The proof original given  of this fact is based on work by Calkin and Wilf \cite{cw} and can be found in \cite[Chapter 17]{az}. They considered  the sequence $q(n)$ defined as the number of ways of writing the integer $n$ as a sum of powers of $2$, each power being used at most twice.  They go on to prove that the map $n \to b(n)/b(n+1)$ is a bijection between the natural and the rational numbers. It turns out that the numbers $q(n)$ appear  in the Calkin and Wilf tree (also known as the Euclid tree). Newman  constructed a map for which the orbit of zero realizes the Calkin and Wilf bijection. Bonanno  and  Isola \cite[Theorem 2.3]{bi} proved that the map $O_R$  is topologically conjugated to the interval map realization of  dyadic odometer. Their proof uses, instead of backward continued fractions, the usual continued fractions. Again, the formal expressions for $O_R$ are different. As we already mentioned, this counting property is a manifestation of Theorem \ref{thm_counting_words} and of its version described in Remark  \ref{count_rat}.

Since every finite sequence can be realized as in infinite sequence ending by a tail of $2$'s (see equation \eqref{finite_2_B}), for the map $R$  the set $\mathcal{F}$ is empty.  Or, if we prefer, the odometric action on finite words corresponds to the top-down odometer, see Definition \ref{odo_finite}. Therefore,

\begin{proposition}
The map $O_R:[0,1) \to [0,1)$ is uniquely ergodic.
\end{proposition}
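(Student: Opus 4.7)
My plan is to transfer the unique ergodicity of the symbolic odometer $O_2$ on $\Sigma_{\geq 2}$ to $O_R$ on $[0,1)$ by means of the coding map. The key input, already supplied in the paragraph preceding the proposition, is that $\mathcal{F}=\emptyset$ for $R$: every rational in $[0,1)$ admits the infinite tail-of-$2$'s representation from equation \eqref{finite_2_B}, and $0 = [2,2,2,\dots]_B$. Consequently the coding map
\[
\pi_R:\Sigma_{\geq 2}\longrightarrow[0,1),\qquad (a_1,a_2,\dots)\longmapsto [a_1,a_2,\dots]_B,
\]
is a \emph{continuous bijection}: the eventually-$2$ sequences biject with $\Q\cap[0,1)$ and the remaining sequences biject with the irrationals.

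I would then appeal to the Lusin--Souslin theorem: a continuous bijection between Polish spaces is automatically a Borel isomorphism. Hence $\pi_R^{-1}$ is Borel measurable, and by Definition \ref{odo_cmi} the map $\pi_R$ is a Borel conjugacy between $O_2$ on $\Sigma_{\geq 2}$ and $O_R$ on $[0,1)$. Combining the remark after Theorem \ref{main_odo} (which identifies $O_2$ topologically with $O_0$), Theorem \ref{main_odo} itself (relating $O_0$ to the restriction of the dyadic odometer to $\Sigma_{2,0}$), and Proposition \ref{erg_odo}(3) (unique ergodicity of the dyadic odometer), the map $O_2$ is uniquely ergodic.

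Given any $O_R$-invariant Borel probability measure $\mu$ on $[0,1)$, the pull-back $(\pi_R^{-1})_*\mu$ is an $O_2$-invariant Borel probability measure on $\Sigma_{\geq 2}$, which by unique ergodicity must coincide with the unique such measure; hence $\mu$ is itself uniquely determined.

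The main obstacle is justifying the first paragraph, namely that $\pi_R$ is bijective and continuous at the countable set of eventually-$2$ sequences. Injectivity and surjectivity follow from the uniqueness of the tail-of-$2$'s expansion provided by equation \eqref{finite_2_B}, while continuity at an eventually-$2$ sequence reduces to the direct computation that the partial convergents $[a_1,\dots,a_n,2,\dots,2]_B$ (with $k$ trailing $2$'s) approach the rational $[a_1,\dots,a_n,2,2,\dots]_B$ as $k\to\infty$. Once this is in place, the Lusin--Souslin step and the transfer of invariant measures are entirely standard.
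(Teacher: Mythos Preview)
Your proposal is correct and follows the same line of reasoning that the paper has in mind. The paper does not give an explicit proof: it simply observes that $\mathcal{F}=\emptyset$ for the Renyi map (every point of $[0,1)$ has an infinite backward continued fraction expansion, rationals via the tail-of-$2$'s convention \eqref{finite_2_B}), and writes ``Therefore'' before stating the proposition. Your argument spells out precisely what is needed to make that ``Therefore'' rigorous: $\pi_R:\Sigma_{\geq 2}\to[0,1)$ is a continuous bijection, Lusin--Souslin upgrades this to a Borel isomorphism, and invariant probability measures transfer bijectively along the Borel conjugacy from the uniquely ergodic $O_2$. One minor remark: continuity of $\pi_R$ is automatic everywhere (cylinders map to shrinking intervals), so the eventually-$2$ sequences do not require separate treatment; the genuine content is the bijectivity, which is exactly what $\mathcal{F}=\emptyset$ provides.
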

It turns out that the only invariant measure is the projection of the Lebesgue measure through the question mark function (see \cite{bi, ip}). In particular, the invariant measure is singular with respect to the Lebesgue measure.

\subsection{The odometer on numbers with partial quotients larger than $k$}
In the previous examples the repeller together with the points with finite orbits form an interval. In this section we provide an example in which this is not the case.  Let $k \in \N$ be a positive integer larger or equal than $2$. Consider the set,
\begin{equation*}
\Lambda_k=\left\{x=[a_1,a_2, \dots]  \in (0,1) : a_i \geq k \text{ for every } i \in \N		\right\}.
\end{equation*}
This is a totally disconnected set having positive Hausdorff dimension strictly less than $1$. Let $G_k$ be the restriction of the Gauss map to the set $\Lambda_k$. Denote by $O_{G_k}:(0,1/k] \to (0,1]$ the transformation obtained from $G_k$,  as given in Definition \ref{odo_cmi}.  Consider the following two  $k-${\it Fibonacci Sequences} defined by,
\begin{eqnarray*}
b_{0}=0, \ b_1=1, \ b_{n+1}=k b_n+b_{n-1}, &\\
d_{0}=1, \ d_1=1, \ d_{n+1}=k d_n+d_{n-1}.
\end{eqnarray*}
An argument similar to that in Proposition \ref{action-gauss} leads to the following:

\begin{proposition} \label{action-k-gauss} 
Let $n \in \N \cup \{0\}$. If $x\in \left[\frac{1}{n+1+k}, \frac{1}{n+k}\right) \cap \Q^c \cap \Lambda_k$ then
\begin{equation*}\label{fibo_odo_k}
O_{G_k}(x)=\frac{x(b_{n}-nd_{n+1})+d_{n+1}}{x(b_{n+1}-nd_{n+2})+d_{n+2}}.
\end{equation*}
\end{proposition}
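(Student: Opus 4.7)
The plan is to adapt the proof of Proposition \ref{action-gauss} to the $k$-restricted setting. The key is a real parameter $a$ playing the role of ``the tail'' of the continued fraction. For $x \in [1/(n+1+k), 1/(n+k)) \cap \Q^c \cap \Lambda_k$ the first partial quotient of $x$ equals $n+k$, so I would write $x = [n+k,\, a]$ with $a = 1/G_k(x) \geq k$ a real parameter, giving $x = a/((n+k)a + 1)$ and $a = x/(1-(n+k)x)$. By Definition \ref{odo_cmi} and Theorem \ref{main_odo}, $O_{G_k}$ is conjugate (off the boundary set) to the restriction of $O_k$ to $\Sigma_{\geq k}$; since $O_k(n+k, w_2, w_3, \ldots) = (\underbrace{k,\ldots,k}_n, w_2 + 1, w_3, \ldots)$, the parametrized formula reads
\[
O_{G_k}[n+k,\, a] = [\underbrace{k,\ldots,k}_n,\, a+1].
\]

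Next I would establish by induction on $n \geq 0$ a closed-form identity of the type
\[
[\underbrace{k,\ldots,k}_n,\, a+1] = \frac{b_n a + d_n}{b_{n+1} a + d_{n+1}},
\]
which is consistent with the initial data $b_0 = 0$, $b_1 = 1$, $d_0 = d_1 = 1$. The base case $n = 0$ reads $1/(a+1)$ on both sides. For the inductive step I would combine
\[
[\underbrace{k,\ldots,k}_{n+1},\, a+1] = \frac{1}{k + [\underbrace{k,\ldots,k}_n,\, a+1]}
\]
with the common recurrences $b_{n+2} = kb_{n+1} + b_n$ and $d_{n+2} = kd_{n+1} + d_n$ to propagate the identity from $n$ to $n+1$.

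Finally I would substitute $a = x/(1-(n+k)x)$ into the ratio above and clear denominators, producing a rational function of $x$ of the claimed shape. The last bookkeeping step is to rewrite the resulting coefficients using the recurrences for $b$ and $d$ (together with the auxiliary identity $d_m = b_m + b_{m-1}$, setting $b_{-1}:=1$, which follows from matching initial conditions in the common recurrence) so as to express the coefficients of $x$ in the indexing displayed in the proposition.

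The main obstacle is the purely algebraic bookkeeping: identifying the correct intermediate closed form in terms of $b_n$ and $d_n$, and then massaging the back-substitution so that the coefficients land in exactly the stated form. Once the induction of the middle step is carried out, Step 4 is a routine manipulation entirely parallel to the last paragraph of the proof of Proposition \ref{action-gauss}.
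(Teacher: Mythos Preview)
Your plan is exactly the adaptation the paper has in mind: it only says ``an argument similar to that in Proposition~\ref{action-gauss}'', and your parametrisation $x=[n+k,\,a]$, the odometric rule $O_{G_k}[n+k,\,a]=[\underbrace{k,\dots,k}_{n},\,a+1]$, the induction
\[
[\underbrace{k,\dots,k}_{n},\,a+1]=\frac{b_n a+d_n}{b_{n+1}a+d_{n+1}},
\]
and the back-substitution $a=x/(1-(n+k)x)$ are all correct and mirror the proof of Proposition~\ref{action-gauss} step by step.

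There is, however, a genuine obstacle at your last ``bookkeeping'' step. After the substitution you obtain
\[
O_{G_k}(x)=\frac{x\bigl(b_n-(n+k)d_n\bigr)+d_n}{x\bigl(b_{n+1}-(n+k)d_{n+1}\bigr)+d_{n+1}},
\]
and this \emph{cannot} be rewritten into the displayed form $\dfrac{x(b_n-n\,d_{n+1})+d_{n+1}}{x(b_{n+1}-n\,d_{n+2})+d_{n+2}}$: the two M\"obius maps are different (e.g.\ for $k=2$, $n=0$ they already disagree at $x=2/5$). Your identity $d_m=b_m+b_{m-1}$ is correct but does not bridge the gap. A good sanity check is the case $k=1$: setting $b_n=f_n$, $d_n=f_{n+1}$ and replacing $n$ by $n-1$, your formula reproduces Proposition~\ref{action-gauss} verbatim, whereas the displayed formula of Proposition~\ref{action-k-gauss} does not. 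So your derivation is the right one; the discrepancy lies in the indices printed in the statement, not in your argument. Carry out the proof as you outlined and record the formula you actually obtain.
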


%
%

\begin{remark}
Note that for $n \in \N$, if $x \in (1/(n+1+k), 1/(n+k))$ then
\begin{equation*}
\lim_{x \to  \frac{1}{n+1+k}}O_{G_k}(x)= \frac{b_n}{b_{n+1}}.
\end{equation*}
Moreover,
\begin{equation*}
\lim_{n \to \infty} \frac{b_n}{b_{n+1}}= \frac{1}{\phi_k},
\end{equation*}
where
\begin{equation*}
\phi_k= \frac{k + \sqrt{k^2+4}}{2},
\end{equation*}
is the $k-$golden mean.
\end{remark}

The map $O_{G_k}$ has one fully supported ergodic measure which corresponds to the projection of the unique invariant measure for $O_k$.

\end{document}